\documentclass[amssymb,twoside,11pt]{article}
\thispagestyle{plain}\pagestyle{myheadings}
\markboth{\small{}}{\small{}}
\usepackage{latexsym,amsmath,graphicx}
\usepackage{amsfonts}
\usepackage{enumitem}
\topmargin -0.3cm \evensidemargin 0.5cm \oddsidemargin 0.5cm
\textheight 8.8in \textwidth 6.0in
\newtheorem{theorem}{Theorem}[section]
\newtheorem{Lemma}[theorem]{{\bf Lemma}}

\newtheorem{definition}{Definition}[section]
\numberwithin{equation}{section}
\newenvironment{proof}{\indent{\em Proof:}}{\quad \hfill
$\Box$\vspace*{2ex}}

\setlength{\parindent}{0.25in} \setlength{\parskip}{0.05in}
\begin{document}
\setcounter{page}{1}
\begin{center}
\vspace{0.4cm} {\large{\bf On Nonlinear  Hybrid  Fractional Differential Equations with Atangana-Baleanu-Caputo Derivative }} \\
\vspace{0.5cm}
Sagar T. Sutar  $^{1}$\\
sutar.sagar007@gmail.com\\
\vspace{0.35cm}
Kishor D. Kucche $^{2}$ \\
kdkucche@gmail.com \\

\vspace{0.35cm}
$^1$ Department of Mathematics, Vivekanand College (Autonomous), Kolhapur-416003, Maharashtra, India.\\
$^{2}$ Department of Mathematics, Shivaji University, Kolhapur-416 004, Maharashtra, India.\\
\end{center}

\def\baselinestretch{1.0}\small\normalsize

\begin{abstract}
In this paper, we develop the theory of nonlinear hybrid fractional differential equations involving Atangana--Baleanu--Caputo (ABC) fractional derivative.  We construct the equivalent fractional integral equation and establish the existence results through it. Further, we build up the theory of inequalities for ABC--hybrid fractional differential equations and use it to examine the uniqueness, existence of a maximal and minimal solution and the comparison results.  
\end{abstract}
\noindent\textbf{Key words:} Hybrid fractional differential equations,  Atangana--Baleanu--Caputo fractional derivative, Existence and uniqueness, Fractional differential inequalities, Comparison results, Maximal and minimal solutions.\\ \\
\noindent
\textbf{2010 Mathematics Subject Classification:} {26A33, 34A12, 34A40, 35B50, 34A99}
\def\baselinestretch{1.5} 
\allowdisplaybreaks
\section{Introduction}
Lakshmikantham and Vatsala \cite{Laksh,Laksh2} developed the primary theory of fractional differential inequalities with Reimann-Liouville and Caputo fractional derivative. Authors utilized the investigated fractional inequalities and the comparison results to study the existence of local, extremal and global solutions to nonlinear fractional differential equations (FDEs). Dhage and Lakshmikantham \cite{dhage} initiated the study of  first order hybrid differential equations and investigated the basic results pertaining to existence and uniqueness of solution. Further, differential inequalities obtained in connection with hybrid FDEs   utilized   to examine   comparison results and qualitative properties of solution. Adopting the similar approach of \cite{dhage},  Zhao et al. in \cite{yzh} extended the  study of  first order hybrid differential equations to hybrid FDEs involving Riemann-Liouville fractional derivative. Further, different class of Hybrid FDEs subject to various initial and boundary conditions have also been studied by several researchers \cite{BashirAHMAD,SFER,JCaba,Ssitho,Sunz}.
 
  On the other hand, intending to eliminate the singular kernel in traditional fractional derivatives, Caputo and Fabrizio \cite{CF1} presented a fractional derivative  with the exponential kernel and  Atangana--Baleanu \cite{ABC1} introduced a fractional derivative in the sense of Caputo with Mittag--Leffler function as its kernel, which notable as  ABC--fractional derivative.  The advantage of  ABC--fractional derivative is that it is nonlocal and has a non-singular kernel. Because of which it has numerous applications in demonstrating different  problems that includes different diseases, such as,  dengue fever outbreak \cite{Amin1}, tumor-immune surveillance mechanism \cite{ABC2}, the clinical implications of diabetes and tuberculosis coexistence \cite{CF5}, the free motion of a coupled oscillator \cite{Amin2},  smoking models \cite{Sume} and coronavirus \cite{MSA}. For the 
 fundamental development  in the theory nonlinear  ABC-FDEs, we refer the reader to the work of Jarad et al. \cite{Jarad}, Baleanu et al.\cite{Bale4}, Syam et al.\cite{Syam}, Afshari et al. \cite{Af},  Shah et al. \cite{shah} and  Ravichandran et al.  \cite{Ravi1,Ravi2,Ravi3,Ravi4}. 
 
 Motivated by the works of \cite{dhage,yzh} and in continuation of a past work we have done in \cite{KKSS}, we develop the theory of nonlinear hybrid ABC-FDEs of the form
  \begin{align}
  ^{ABC}{_0\mathcal{D}^\alpha_\tau}\left( \dfrac{\omega(\tau)}{f(\tau,\omega(\tau))}\right) &=g(\tau,\omega(\tau)), \;\text{a.e.}~\tau\in J,\label{He1.1}\\
  \omega(0)&=\omega_0,\label{He1.2}
  \end{align}
 where,
 \begin{itemize} [topsep=0pt,itemsep=-1ex,partopsep=1ex,parsep=1ex]
 \item[(i)]  $J=[0,T],\;T>0$ and  $0<\alpha<1$,
\item[(ii)] $^{ABC}{_0\mathcal{D}}_{\tau}^\alpha$ denotes left ABC-  fractional differential operator of order $\alpha$ with lower terminal $ 0$,
 \item[(iii)] $f\in C(J\times\mathbb{R},\mathbb{R}$$\backslash\left\lbrace 0\right\rbrace )$, $\omega\in C(J)\;\text{\normalfont {and}} \;^{ABC}{_0\mathcal{D}}_{\tau}^\alpha h \in C(J),$ where $h(\tau)=\dfrac{\omega(\tau)}{f(\tau,\omega(\tau))},\tau\in J$.
 \item[(iv)] $g\in \mathcal{C}$ is such that $g(0,\omega(0))=0$, where 
 $$\mathcal{C}=\left\lbrace h ~|~h:J\times\mathbb{R}\to \mathbb{R}  ~\text{is continious}, ~h(\tau,\cdot)\; \text{is measurable and } h(\cdot,\omega) \;\text{is continious} \right\rbrace.
 $$
 \end{itemize}
The primary aim of the current study is to determine the  equivalent fractional integral equation to ABC--hybrid FDEs  \eqref{He1.1}-\eqref{He1.2} and explore the existence results. Further, we build up the theory of inequalities for ABC--hybrid FDEs and use it to examine the existence of a maximal and minimal solution and the comparison results.

The current paper is coordinated as follows. In section 2,  we review essential definitions and results about ABC-fractional derivative. In section 3, we give equivalent fractional integral equations and derive existence result through it.  In section 4, we acquire fractional differential inequalities for ABC--hybrid FDEs.  Section 5 deals with the existence of maximal and minimal solutions of ABC--hybrid FDEs.  In section 6, we determine comparison results relating to ABC--hybrid FDEs.
 
 \section{Preliminaries} 
 In this section, we recall the basic definitions and the results about ABC-fractional derivative which will be used later.
\begin{definition}
 A function $\omega\in AC(J,\mathbb{R})$ is said to be solution of  {\normalfont ABC-hybrid-FDEs} \eqref{He1.1}--\eqref{He1.2}, if the mapping  $u\to\dfrac{u}{f(\tau,u
  )}$ is absolutely continuous for each $u\in \mathbb{R}$ and $\omega$ satisfies {\normalfont  ABC-hybrid-FDEs} \eqref{He1.1}--\eqref{He1.2}, where
  $ 
  AC(J,\mathbb{R})=\left\lbrace h~|~h:J\to \mathbb{R} \;\text{is absolutely continious }\right\rbrace $.  
\end{definition}
\begin{definition} {\normalfont\cite{Syam}}
Let $p\in[1,\infty)$ and $\Omega$ be an open subset of $\mathbb{R}$ the Sobolev space $H^p(\Omega)$ is defined as 
$$ H^p(\Omega)=\left\lbrace f\in L^2(\Omega):D^\beta f\in L^2(\Omega), \text{for all } |\beta|\leq p\right\rbrace .$$
\end{definition}
\begin{definition}{\normalfont\cite{ABC1}}
Let $\omega\in H^1(0,T)$ and $\alpha \in [0,1]$, the left Atangana--Baleanu--Caputo fractional derivative of $\omega$ of order $\alpha $ is defined  by
\begin{equation*}
^{ABC}{_0}\mathcal{D}_{\tau}^\alpha \omega(\tau)=\dfrac{B(\alpha)}{1-\alpha}\int_{0}^{\tau}\mathbb{E}_\alpha\left[ -\dfrac{\alpha}{1-\alpha}(\tau-\sigma)^\alpha\right] \omega'(\sigma)d\sigma,
\end{equation*} 
where $B(\alpha)>0$ is a normalization function satisfying $B(0)=B(1)=1$ and $\mathbb{E}_\alpha$ is one parameter Mittag-Leffler function  {\normalfont\cite{Kilbas,Diethelm}}  defined by 
$$
\mathbb{E}_\alpha (z)=\sum_{n=0}^{n=\infty}\frac{z^n}{\Gamma(n \alpha+1)}.
$$
The associated fractional integral is defined  by
\begin{equation*}
^{AB}{_0}I_\tau^\alpha \omega(\tau)=\dfrac{1-\alpha}{B(\alpha)} \omega(\tau)+\dfrac{\alpha}{B(\alpha)}~{_0}I_\tau^\alpha \omega(\tau).
\end{equation*} 
where 
$${_0}I_\tau^\alpha \omega(\tau)= \dfrac{1}{\Gamma(\alpha)}\int_{0}^{\tau}(\tau-\sigma)^{\alpha-1}\omega(\sigma)d\sigma,
$$
is the Riemann--Liouville fractional integral {\normalfont\cite{Kilbas,Diethelm}} of $\omega$ of order $\alpha$.
\end{definition} 
\begin{Lemma}{\normalfont\cite{Bal1}} If $0<\alpha<1$,~ then 
$
^{AB}{_0}I_\tau^\alpha\; \left( ^{ABC}{_0}\mathcal{D}_{\tau}^\alpha \omega(\tau)\right) =\omega(\tau)-\omega(0).
$
\end{Lemma}

\begin{Lemma}{\normalfont\cite{Syam,Ravi1}}\label{ABCLm4.1}
The equivalent fractional integral equation to the { \normalfont the  ABC-FDEs }
\begin{align*}
^{ABC}{_0\mathcal{D}}_\tau^\alpha \omega(\tau)&= f\left( \tau, \omega(\tau)\right), \tau\in J=[0, T],\; T>0,  \\
\omega(0)&=\omega_0,
\end{align*}
is given by
\begin{equation*}
\omega(\tau)=\omega_0+\dfrac{1-\alpha}{B(\alpha)}  f(\tau,\omega(\tau)) +\dfrac{\alpha}{B(\alpha)\Gamma(\alpha)}\int_{0}^\tau(\tau-\sigma)^{\alpha-1}f(\sigma,\omega(\sigma))d\sigma.
\end{equation*}
\end{Lemma}
 
\begin{definition}{\normalfont\cite{pra,kilb,Erd}}
The generalized Mittag-Leffler function $ \mathbb{E}_{\alpha,\beta}^\gamma(z) $ for the complex numbers $\alpha,\beta,\gamma $ with Re($\alpha)>0$ is defined as 
$$
 \mathbb{E}_{\alpha,\beta}^\gamma(z)=\sum_{k=0}^{\infty}\dfrac{(\gamma)_k}{\Gamma(\alpha k+\beta)}\dfrac{z^k}{k!},
$$
where $(\gamma)_k$ is the Pochhammer symbol given by
$$
(\gamma)_0=1,\;(\gamma)_k=\gamma(\gamma+1)\cdots(\gamma+k-1),\; k=1,2,\cdots
$$
\end{definition}
Note that,
$$
\mathbb{E}_{\alpha,\beta}^1(z) =\mathbb{E}_{\alpha,\beta}(z) ~\mbox{and }~\mathbb{E}_{\alpha,1}^1(z) =\mathbb{E}_{\alpha}(z).
$$
\begin{Lemma}{\normalfont\cite{Bal1}}\label{ABCLm2.2} 
Let  $0<\alpha<1$ and $\beta,\sigma,\lambda\in\mathbb{C} \left( Re(\beta)>0 \right)$. Then
$$
^{ABC}{_0\mathcal{D}}_{\tau}^\alpha \left[\tau^{\beta-1}\;\mathbb{E}_{\alpha,\, \beta}^\sigma \, (\lambda \, \tau^\alpha) \right]=\dfrac{B(\alpha)}{1-\alpha} \; \tau^{\beta-1}\;\mathbb{E}_{\alpha,\,\beta}^{1+\sigma}(\lambda \,\tau^\alpha).
$$
\end{Lemma}
\begin{Lemma}{\normalfont\cite{KKSS}}\label{Lm2.4}
If $m$ is any differentiable function  on $J$ such that $^{ABC}{_0\mathcal{D}}_{\tau}^\alpha m\in C(J)$  and there exists $\tau_0\in (0,T]$ with $m(\tau_0)= 0,\;m(\tau)\leq 0,\; \tau\in [0,\tau_0)$,  then ~$^{ABC}{_0\mathcal{D}}_{\tau}^\alpha m(\tau_0)\geq 0$.
\end{Lemma}

\begin{Lemma}\label{LM2.3}{\normalfont\cite{dhage}}
Let $\mathcal{S}$ be a non-empty, closed convex and bounded subset of Banach algebra $\Omega$ and let $\mathcal{F}_1:\Omega\to \Omega$ and $\mathcal{F}_2:\mathcal{S}\to \Omega$ be two operators such that
 \begin{itemize} [topsep=0pt,itemsep=-1ex,partopsep=1ex,parsep=1ex]
\item [\normalfont(i)] $\mathcal{F}_1$ is Lipschitzian   with Lipschitz constant $\alpha$,
\item [\normalfont(ii)] $\mathcal{F}_2$ is completely continious,
\item [\normalfont(iii)]$\omega=\mathcal{F}_1\omega \mathcal{F}_2\eta\implies \omega\in \mathcal{S}$ for all $\eta\in \mathcal{S}$, and
\item [\normalfont(iv)]$\alpha M<1$, where $M=\sup\left\lbrace ||\mathcal{F}_2(\omega)||:\omega\in \mathcal{S}\right\rbrace $,
\end{itemize}
then the operator $ \mathcal{F}_1\omega \mathcal{F}_2\omega=\omega$ has a solution in $\mathcal{S}$.
\end{Lemma}
\section{Existence result}

In the following Theorem, we derive an equivalent fractional integral equation to ABC--hybrid FDEs  \eqref{He1.1}-\eqref{He1.2}. 
\begin{theorem}\label{HYThm3.1}
Let $g\in  C(J\times\mathbb{R},\mathbb{R})$ and assume that, $\omega\to\dfrac{\omega}{f(\tau,\omega)}$ is increasing in $\mathbb{R}$ a.e. for each $\tau\in J$.  Then  $\omega\in AC(J,\mathbb{R})$ is a solution of  {\normalfont ABC--hybrid FDEs} \eqref{He1.1}--\eqref{He1.2} if and only if $\omega$ is a solution of fractional integral equation
\begin{small}
\begin{equation}\label{He3.1}
\omega(\tau)=f(\tau,\omega(\tau))\left[  \dfrac{\omega_0}{f(0,\omega_0)}+\dfrac{1-\alpha}{B(\alpha)} g(\tau,\omega(\tau))+\dfrac{\alpha}{B(\alpha)(1-\alpha)}\int_{0}^{\tau}(\tau-\sigma)^{\alpha-1}g(\sigma,\omega(\sigma))d\sigma\right],\tau\in J.
\end{equation}
 \end{small} 
\end{theorem}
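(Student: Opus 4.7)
The natural plan is to pivot on the auxiliary function $h(\tau):=\omega(\tau)/f(\tau,\omega(\tau))$, which is well defined since $f$ does not vanish, and to translate between the ABC fractional differential equation for $h$ and the equivalent fractional integral equation by means of the inversion identity from the preliminaries. The strict monotonicity hypothesis on $u\mapsto u/f(\tau,u)$ will be invoked in the sufficiency direction to recover $\omega(0)=\omega_0$ from an identity at $\tau=0$.

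For the necessity direction ($\Rightarrow$), assume $\omega\in AC(J,\mathbb{R})$ solves \eqref{He1.1}--\eqref{He1.2}. Then the system is equivalent to
$$^{ABC}{_0\mathcal{D}}_\tau^\alpha h(\tau)=g(\tau,\omega(\tau)),\qquad h(0)=\frac{\omega_0}{f(0,\omega_0)}.$$
Applying $^{AB}{_0}I_\tau^\alpha$ to both sides and using the inversion lemma $^{AB}{_0}I_\tau^\alpha\,{^{ABC}{_0\mathcal{D}}_\tau^\alpha}h=h-h(0)$ from the preliminaries gives $h(\tau)=h(0)+{^{AB}{_0}I_\tau^\alpha}g(\tau,\omega(\tau))$. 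Expanding $^{AB}{_0}I_\tau^\alpha$ via its definition produces the local summand $\frac{1-\alpha}{B(\alpha)}g(\tau,\omega(\tau))$ and a Riemann--Liouville summand; multiplying through by $f(\tau,\omega(\tau))$ then yields \eqref{He3.1}.

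For the sufficiency direction ($\Leftarrow$), assume \eqref{He3.1} and let $h$ be as above. Dividing \eqref{He3.1} by $f(\tau,\omega(\tau))$ gives
$$h(\tau)=\frac{\omega_0}{f(0,\omega_0)}+{^{AB}{_0}I_\tau^\alpha}g(\tau,\omega(\tau)).$$
Evaluating at $\tau=0$ and using $g(0,\omega(0))=0$ yields $\omega(0)/f(0,\omega(0))=\omega_0/f(0,\omega_0)$; strict monotonicity of $u\mapsto u/f(0,u)$ makes this map injective and therefore forces $\omega(0)=\omega_0$. Applying $^{ABC}{_0\mathcal{D}}_\tau^\alpha$ to both sides of the displayed identity for $h$, the constant $h(0)$ has vanishing ABC derivative and the dual composition $^{ABC}{_0\mathcal{D}}_\tau^\alpha\,{^{AB}{_0}I_\tau^\alpha}g=g$ delivers $^{ABC}{_0\mathcal{D}}_\tau^\alpha h(\tau)=g(\tau,\omega(\tau))$, which is \eqref{He1.1}.

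The main technical obstacle is justifying the dual composition $^{ABC}{_0\mathcal{D}}_\tau^\alpha\,{^{AB}{_0}I_\tau^\alpha}g=g$ invoked in the sufficiency direction, since the preliminaries explicitly record only the opposite order of composition. One route is term-by-term verification: the constant $h(0)$ is killed by the derivative directly from its integral definition; the local summand $\frac{1-\alpha}{B(\alpha)}g$ combines with the differentiation of the Riemann--Liouville convolution against $g$ through the Mittag--Leffler kernel of $^{ABC}{_0\mathcal{D}}_\tau^\alpha$, and Lemma \ref{ABCLm2.2} supplies the required cancellation between the two contributions. The regularity hypothesis $^{ABC}{_0\mathcal{D}}_\tau^\alpha h\in C(J)$ legitimizes exchanging derivative and integral, while the assumption $g(0,\omega(0))=0$ is precisely what removes the boundary contribution from the local term and allows $\omega(0)=\omega_0$ to be recovered without a spurious remainder.
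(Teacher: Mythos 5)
Your proposal is correct, and its necessity half coincides with the paper's: both apply $^{AB}{_0}I_\tau^\alpha$ to the equation for $h(\tau)=\omega(\tau)/f(\tau,\omega(\tau))$ and invoke the inversion lemma to reach \eqref{He3.2}. In the sufficiency half you take a mildly but genuinely different route. You apply $^{ABC}{_0\mathcal{D}}_\tau^\alpha$ directly and rely on the ``dual composition'' $^{ABC}{_0\mathcal{D}}_\tau^\alpha\, {}^{AB}{_0}I_\tau^\alpha g = g$, whereas the paper applies the Riemann-type operator $^{ABR}{_0\mathcal{D}}_\tau^\alpha$, for which $^{ABR}{_0\mathcal{D}}_\tau^\alpha\, {}^{AB}{_0}I_\tau^\alpha$ is the identity by a recorded result, accepts the Mittag--Leffler term $\frac{\omega_0}{f(0,\omega_0)}\mathbb{E}_\alpha\left[ -\frac{\alpha}{1-\alpha}\tau^\alpha\right]$ produced by the constant, and then removes it via the relation between $^{ABR}$ and $^{ABC}$ from Theorem 1 of \cite{ABC1}. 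The two routes are arithmetically the same computation: in general one has $^{ABC}{_0\mathcal{D}}_\tau^\alpha\, {}^{AB}{_0}I_\tau^\alpha g(\tau) = g(\tau)-g(0,\omega(0))\,\mathbb{E}_\alpha\left[ -\frac{\alpha}{1-\alpha}\tau^\alpha\right]$, so your identity holds precisely because $g(0,\omega(0))=0$, a point you correctly flag. What your version buys is transparency about where that hypothesis enters the differential step; what the paper's detour through $^{ABR}$ buys is that it reduces your one unproved composition identity to results already on record, and it is also the cleanest way to make your sketch rigorous, since Lemma~\ref{ABCLm2.2} on its own concerns ABC derivatives of Mittag--Leffler-type functions and does not directly yield the cancellation for a general $g$. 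The remaining steps, evaluation at $\tau=0$ and injectivity of $u\mapsto u/f(0,u)$ to recover $\omega(0)=\omega_0$, match the paper exactly.
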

\begin{proof}
In the view of Lemma \ref{ABCLm4.1}, if $\omega$ is a solution of  ABC--hybrid FDEs  \eqref{He1.1}--\eqref{He1.2}, then $\omega$ satisfies fractional integral equation
\begin{equation}\label{He3.2}
\begin{small}
\dfrac{\omega(\tau)}{f(\tau,\omega(\tau))}=\dfrac{\omega_0}{f(0,\omega_0)}+\dfrac{1-\alpha}{B(\alpha)} g(\tau,\omega(\tau))+\dfrac{\alpha}{B(\alpha)(1-\alpha)}\int_{0}^{\tau}(\tau-\sigma)^{\alpha-1}g(\sigma,\omega(\sigma))d\sigma,\;\tau\in J,
\end{small}
\end{equation}

\noindent which gives Eq.\eqref{He3.1}. Conversely, let $\omega$ satisfies Eq.\eqref{He3.1}. Then it can be written in the form of Eq.\eqref{He3.2}. 
Operating $^{ABR}{_0\mathcal{D}^\alpha_\tau}$ on both sides of Eq.\eqref{He3.2}, we obtain
\begin{align*}
^{ABR}{_0\mathcal{D}^\alpha_\tau} \left( \dfrac{\omega(\tau)}{f(\tau,\omega(\tau))}\right) &=\;^{ABR}{_0\mathcal{D}^\alpha_\tau}\left[\dfrac{\omega_0}{f(0,\omega_0)}+^{AB}{_0}I_\tau^\alpha g(\tau,\omega(\tau)) \right]\\
& =\dfrac{\omega_0}{f(0,\omega_0)}^{ABR}{_0\mathcal{D}^\alpha_\tau}(1)+g(\tau,\omega(\tau))\\
&=\dfrac{\omega_0}{f(0,\omega_0)}\mathbb{E}_\alpha\left[ -\dfrac{\alpha}{1-\alpha}\tau^\alpha\right]+g(\tau,\omega(\tau),\;\tau\in J.
\end{align*}
This gives, 
$$
^{ABR}{_0\mathcal{D}^\alpha_\tau} \left( \dfrac{\omega(\tau)}{f(\tau,\omega(\tau))}\right) -\dfrac{\omega_0}{f(0,\omega_0)}\mathbb{E}_\alpha\left[ -\dfrac{\alpha}{1-\alpha}\tau^\alpha\right]=g(\tau,\omega(\tau)),\;\tau\in J.
$$
Using relation between fractional differential operators $^{ABR}{_0\mathcal{D}^\alpha_\tau} $ and $^{ABC}{_0\mathcal{D}^\alpha_\tau} $ given in Theorem 1\cite{ABC1}, we obtain
$$^{ABC}{_0\mathcal{D}^\alpha_\tau} \left( \dfrac{\omega(\tau)}{f(\tau,\omega(\tau))}\right) =g(\tau,\omega(\tau)),\; \tau\in J. $$
Now putting  $\tau=0$ in Eq.\eqref{He3.2} and using the fact $g(0,\omega(0))=0$, we obtain
\begin{equation}\label{Hye3.3}
\dfrac{\omega(0)}{f(0,\omega(0))}=\dfrac{\omega_0}{f(0,\omega_0)}.
\end{equation}

\noindent For each $\tau\in J$, consider the mapping $h_\tau:\mathbb{R}\to\mathbb{R}$ defined by,
$$h_\tau(\eta)=\dfrac{\eta}{f(\tau,\eta)},\;\eta\in\mathbb{R}.$$
By assumption $h_\tau:\mathbb{R}\to\mathbb{R}$ is increasing   and hence it is injective. Using definition of $h_\tau$,  Eq.\eqref{Hye3.3}
can be written as $$ h_0(\omega(0))=h_0(\omega_0).$$
Since $h_0$ is injective, we have $\omega(0)=\omega_0$. 
This completes the proof of the Theorem.
\end{proof}

 To prove existence results for solution of ABC--hybrid FDEs \eqref{He1.1}-\eqref{He1.2}, we need following assumptions on $f$ and $g$.
\begin{itemize}
\item [(H1)] The function $f\in C\left( J\times\mathbb{R}, \mathbb{R}\backslash\left\lbrace 0\right\rbrace\right)$ is such 
that, 
\begin{itemize}
\item [(i)] $|f(\tau,\omega)-f(\tau,\eta)|\leq L|\omega-\eta|,\;L>0$,
\item [(ii)] the mapping $\omega\to\dfrac{\omega}{f(\tau,\omega)}$ is increasing in $\mathbb{R}$ a.e. for each $\tau\in J$.
\end{itemize}  
\item [(H2)] The function $g\in \mathcal{C}$ is such that,
$|g(\tau,\omega(\tau)|\leq h(\tau),a.e.\;\tau\in J,\;h\in C(J,\mathbb{R}^+)$.
 \end{itemize}

\begin{theorem}\label{Hythm3.2}
Suppose  the hypotheses {\normalfont(H1)--\normalfont(H2)} hold. Then  {\normalfont ABC--hybrid FDEs} \eqref{He1.1}--\eqref{He1.2} has a solution if  \begin{equation}\label{b4.4}
L\left( \left| \frac{\omega_0}{f(0,\omega_0)}\right|  +\left[ 1-\alpha+\frac{T^\alpha}{1-\alpha}\right]\frac{\|h\|}{B(\alpha)}\right) <1.
\end{equation} 
\end{theorem}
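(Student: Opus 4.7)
The plan is to recast the problem as a fixed-point equation in the Banach algebra $\Omega=C(J,\mathbb{R})$ (with the supremum norm $\|\cdot\|$ and pointwise multiplication) and then invoke the hybrid fixed-point theorem of Lemma \ref{LM2.3}. By Theorem \ref{HYThm3.1}, solving \eqref{He1.1}--\eqref{He1.2} is equivalent to finding $\omega\in C(J,\mathbb{R})$ with $\omega=\mathcal{F}_1\omega\cdot\mathcal{F}_2\omega$, where I would set
$$\mathcal{F}_1\omega(\tau)=f(\tau,\omega(\tau)),\qquad \mathcal{F}_2\omega(\tau)=\frac{\omega_0}{f(0,\omega_0)}+\frac{1-\alpha}{B(\alpha)}g(\tau,\omega(\tau))+\frac{\alpha}{B(\alpha)(1-\alpha)}\int_0^\tau(\tau-\sigma)^{\alpha-1}g(\sigma,\omega(\sigma))\,d\sigma.$$

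Next I would fix the working set. Put $F_0=\sup_{\tau\in J}|f(\tau,0)|$ and
$$M=\left|\frac{\omega_0}{f(0,\omega_0)}\right|+\left[(1-\alpha)+\frac{T^\alpha}{1-\alpha}\right]\frac{\|h\|}{B(\alpha)},\qquad r=\frac{F_0\,M}{1-LM}.$$
Hypothesis \eqref{b4.4} gives $LM<1$, hence $r>0$, and $\mathcal{S}=\{\omega\in\Omega:\|\omega\|\le r\}$ is nonempty, closed, convex and bounded. Verifying three of the four Dhage conditions is routine: (i) follows directly from (H1)(i), yielding $\|\mathcal{F}_1\omega-\mathcal{F}_1\eta\|\le L\|\omega-\eta\|$; (iv) follows by bounding $\mathcal{F}_2\omega$ using (H2) and $\int_0^\tau(\tau-\sigma)^{\alpha-1}d\sigma=\tau^\alpha/\alpha$, which yields $\sup_{\omega\in\mathcal{S}}\|\mathcal{F}_2\omega\|\le M$ and thus $LM<1$; for (iii), if $\omega=\mathcal{F}_1\omega\cdot\mathcal{F}_2\eta$ with $\eta\in\mathcal{S}$, the pointwise estimate $|\omega(\tau)|\le(L|\omega(\tau)|+F_0)M$ rearranges to $\|\omega\|\le F_0M/(1-LM)=r$, so $\omega\in\mathcal{S}$.

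The main obstacle is establishing condition (ii), namely that $\mathcal{F}_2:\mathcal{S}\to\Omega$ is completely continuous. Continuity would be proved by combining continuity of $g$ in its second argument with the Lebesgue dominated convergence theorem, the dominating function being $(\tau-\sigma)^{\alpha-1}h(\sigma)$ supplied by (H2). For relative compactness I would apply the Arzel\`a--Ascoli theorem: uniform boundedness on $\mathcal{S}$ is the estimate already used for condition (iv), while equicontinuity is the delicate point because of the singular kernel $(\tau-\sigma)^{\alpha-1}$. For $0\le\tau_1<\tau_2\le T$ one writes
$$\mathcal{F}_2\omega(\tau_2)-\mathcal{F}_2\omega(\tau_1)=\frac{1-\alpha}{B(\alpha)}\bigl[g(\tau_2,\omega(\tau_2))-g(\tau_1,\omega(\tau_1))\bigr]+\frac{\alpha}{B(\alpha)(1-\alpha)}\int_0^{\tau_2}K(\tau_1,\tau_2,\sigma)g(\sigma,\omega(\sigma))\,d\sigma,$$
with $K$ the usual difference of kernels, and then bounds the integral by $\|h\|$ times $\int_0^{\tau_1}\bigl|(\tau_2-\sigma)^{\alpha-1}-(\tau_1-\sigma)^{\alpha-1}\bigr|d\sigma+\int_{\tau_1}^{\tau_2}(\tau_2-\sigma)^{\alpha-1}d\sigma$, both of which tend to $0$ as $\tau_2\to\tau_1$ uniformly in $\omega\in\mathcal{S}$ by standard fractional-calculus estimates. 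Arzel\`a--Ascoli then gives complete continuity, all four hypotheses of Lemma \ref{LM2.3} are met, and the resulting fixed point in $\mathcal{S}$ is, via Theorem \ref{HYThm3.1}, a solution of \eqref{He1.1}--\eqref{He1.2}.
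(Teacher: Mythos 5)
Your proposal follows essentially the same route as the paper: the same operators $\mathcal{F}_1$ and $\mathcal{F}_2$, the same closed ball, and the same verification of the four hypotheses of Dhage's hybrid fixed-point theorem (Lemma \ref{LM2.3}), with complete continuity of $\mathcal{F}_2$ obtained via dominated convergence plus Arzel\`a--Ascoli and the kernel-difference estimate for equicontinuity. Your radius $r=F_0M/(1-LM)$ is in fact what the rearrangement in condition (iii) actually yields; the paper's displayed $R$ carries a spurious extra factor of $L$ in the numerator, so your version is the cleaner one.
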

\begin{proof} Let $\Omega=\left( C(J,\mathbb{R}),\|\cdot\|\right) $,
 where $\|\omega\|=\underset{\tau\in J}{\sup}|\omega(\tau)|.$ Then $\Omega$ is Banach algebra with multiplication  defined by 
$$(\omega\eta)\tau=\omega(\tau)\eta(\tau),\; \omega,\eta\in\Omega,\;\tau\in J.$$
Define, \begin{small}
\begin{equation}
R=\dfrac {M_fL\left( \left| \dfrac{\omega_0}{f(0,\omega_0)}\right|  +\left[ 1-\alpha+\frac{T^\alpha}{1-\alpha}\right]\dfrac{\|h\|}{B(\alpha)}\right) }{1-L\left( \left| \dfrac{\omega_0}{f(0,\omega_0)}\right|  +\left[ 1-\alpha+\frac{T^\alpha}{1-\alpha}\right]\dfrac{\|h\|}{B(\alpha)}\right)},
\end{equation}
\end{small}  where $M_f=\underset{\tau\in J}{\sup}|f(\tau,0)|$. In the view condition \eqref{b4.4}, $R>0$. 

\noindent Consider the set,
$$ \mathcal{S}=\left\lbrace \omega\in \Omega:\;\|\omega\|\leq R\right\rbrace.$$
One can verify that $\mathcal{S}$ is closed, convex and bounded subset of Banach algebra $\Omega$. Consider the operators $\mathcal{F}_1:\Omega\to \Omega$ and $\mathcal{F}_1:\mathcal{S}\to \Omega$ defined by,
\begin{align}
(\mathcal{F}_1\omega)(\tau)&=f(\tau,\omega(\tau)),\;\tau\in J,\label{e4.6}\\
(\mathcal{F}_2\omega)(\tau)&=\dfrac{\omega_0}{f(0,\omega_0)}+\dfrac{1-\alpha}{B(\alpha)}  g(\tau,\omega(\tau))+\dfrac{\alpha}{B(\alpha)(1-\alpha)}\int_{0}^{\tau}(\tau-\sigma)^{\alpha-1}g(\sigma,\omega(\sigma))d\sigma,\;\tau\in J,\label{e4.7}
\end{align}
The equivalent fraction integral Eq.\eqref{He3.1} to the ABC--hybrid FDEs \eqref{He1.1}--\eqref{He1.2} can be written in operator equation  form given by 
$$ \omega=\mathcal{F}_1\omega\mathcal{F}_2\omega,\;\omega\in \Omega.$$
We prove  that the operators $\mathcal{F}_1$ and $\mathcal{F}_2$ satisfies conditions of Lemma \ref{LM2.3}. The proof of the same have been given in following steps.

\noindent Step 1) $\mathcal{F}_1$ is Lipschitz.

\noindent Using Lipschitz condition on $f$, for any $\omega,\eta\in \Omega$ and $\tau\in J$ we obtain,
\begin{align*}
|(\mathcal{F}_1\omega)(\tau)-(\mathcal{F}_1\eta)(\tau)|&=|f(\tau,\omega(\tau))-f(\tau,\eta(\tau))|\leq L|\omega(\tau)-\eta(\tau)|,
\end{align*}
This gives, 
$$\|\mathcal{F}_1\omega-\mathcal{F}_1\eta\|\leq L\|\omega-\eta\|,\;\omega,\eta\in \Omega. $$
\noindent Step 2) $ \mathcal{F}_2$ is completely continuous.

\noindent We show that $\mathcal{F}_2:\mathcal{S}\to \Omega$ is a compact and continuous operator on $\mathcal{S}$ into $\Omega$. First we show that $\mathcal{F}_2$ is continuous on $\mathcal{S}$. Let $\left\lbrace \omega_n\right\rbrace $
be a sequence in $\mathcal{S}$ converging to a point $\omega \in S$. Then by the Lebesgue dominated convergence theorem, 
\begin{align*}
\lim\limits_{n\to\infty}
(\mathcal{F}_2\omega_n)(\tau)&=\lim\limits_{n\to\infty}\left[ \dfrac{1-\alpha}{B(\alpha)}g(\tau,\omega_n(\tau))+\dfrac{\alpha}{B(\alpha)(1-\alpha)}\int_{0}^{\tau}(\tau-\sigma)^{\alpha-1}g(\sigma,\omega_n(\sigma))d\sigma\right] \\
&=\dfrac{1-\alpha}{B(\alpha)}\lim\limits_{n\to\infty}g(\tau,\omega_n(\tau))+\dfrac{\alpha}{B(\alpha)(1-\alpha)}\int_{0}^{\tau}(\tau-\sigma)^{\alpha-1}\left\lbrace \lim\limits_{n\to\infty}g(\sigma,\omega_n(\sigma))\right\rbrace d\sigma\\
&=\dfrac{1-\alpha}{B(\alpha)}g(\tau,\omega(\tau))+\dfrac{\alpha}{B(\alpha)(1-\alpha)}\int_{0}^{\tau}(\tau-\sigma)^{\alpha-1}g(\sigma,\omega(\sigma))d\sigma\\
&=(\mathcal{F}_2\omega)(\tau),
 \end{align*}
 for all $\tau \in J$. This shows that $\mathcal{F}_2$ is a continuous operator on $\mathcal{S}$.
 Using hypothesis (H2), for any $\omega\in \mathcal{S}$ and $\tau\in J$, we have
\begin{align}\label{Hye4.7}
|(\mathcal{F}_2\omega)(\tau)|&\leq \left| \dfrac{\omega_0}{f(0,\omega_0)}\right|  +\dfrac{1-\alpha}{B(\alpha)}\left|   g(\tau,\omega(\tau))\right| +\dfrac{\alpha}{B(\alpha)(1-\alpha)}\int_{0}^{\tau}(\tau-\sigma)^{\alpha-1}\left| g(\sigma,\omega(\sigma))\right| d\sigma\nonumber\\
&\leq \left| \dfrac{\omega_0}{f(0,\omega_0)}\right|  +\dfrac{1-\alpha}{B(\alpha)}|h(\tau)| + \dfrac{\alpha}{B(\alpha)(1-\alpha)}\int_{0}^{\tau}(\tau-\sigma)^{\alpha-1}|h(\sigma)| d\sigma\nonumber\\
&\leq \left| \dfrac{\omega_0}{f(0,\omega_0)}\right|  +\dfrac{1-\alpha}{B(\alpha)}\|h\| + \dfrac{\|h\|}{B(\alpha)(1-\alpha)}\tau^\alpha,\nonumber
\end{align}
This gives,
\begin{equation}\label{HYe4.9}
|(\mathcal{F}_2\omega)(\tau)|\leq\left| \frac{\omega_0}{f(0,\omega_0)}\right|  +\left( 1-\alpha+\frac{T^\alpha}{1-\alpha}\right)\frac{\|h\|}{B(\alpha)},\;\omega\in \mathcal{S},\;\tau\in J,
\end{equation}
 which shows that $\mathcal{F}_2$ is uniformly bounded on $J$. Next we prove that $\mathcal{F}_2(\mathcal{S})$ is equicontinious set in $\Omega$. Let any $\omega\in \mathcal{S}$ and   $0\leq \tau_1<\tau_2\leq T$.  Then  we have
\begin{align}\label{ie4.9}
&|\mathcal{F}_2\omega(\tau_1)-\mathcal{F}_2\omega(\tau_2)|
\leq \dfrac{1-\alpha}{B(\alpha)}\left|   g(\tau_1,\omega(\tau_1))-g(\tau_2,\omega(\tau_2))\right|
\nonumber\\&+\dfrac{\alpha}{B(\alpha)(1-\alpha)}\left| \int_{0}^{\tau_1} (\tau_1-\sigma)^{\alpha-1} g(\sigma,\omega(\sigma)) d\sigma-\int_{\tau_1}^{\tau_2} (\tau_2-\sigma)^{\alpha-1} g(\sigma,\omega(\sigma)) d\sigma\right|
\end{align}
Since $g(\tau,\omega)$ is continuous on compact set $J\times [-R,\; R]$, it is uniformly continuous there and hence we have
\begin{equation}\label{Hyee4.10}
\left|   g(\tau_1,\omega(\tau_1))-g(\tau_2,\omega(\tau_2))\right|\to 0,\; \text{as}\; |\tau_1-\tau_2|\to 0,\;\text{for each}\; \omega\in \mathcal{S}.
\end{equation}
Next using hypothesis (H2), we have
\begin{align}\label{HHe4.11}
&\left| \int_{0}^{\tau_1} (\tau_1-\sigma)^{\alpha-1} g(\sigma,\omega(\sigma)) d\sigma-\int_{\tau_1}^{\tau_2} (\tau_2-\sigma)^{\alpha-1}g(\sigma,\omega(\sigma)) d\sigma\right|\nonumber \\
&\leq \int_{0}^{\tau_1}\left\lbrace  (\tau_1-\sigma)^{\alpha-1}-(\tau_2-\sigma)^{\alpha-1}\right\rbrace |g(\sigma,\omega(\sigma))| d\sigma+\int_{\tau_1}^{\tau_2} (\tau_2-\sigma)^{\alpha-1}|g(\sigma,\omega(\sigma))| d\sigma\nonumber\\
&\leq \int_{0}^{\tau_1}\left\lbrace  (\tau_1-\sigma)^{\alpha-1}-(\tau_2-\sigma)^{\alpha-1}\right\rbrace |h(\sigma)| d\sigma+\int_{\tau_1}^{\tau_2} (\tau_2-\sigma)^{\alpha-1}|h(\sigma)| d\sigma\nonumber\\
&\leq \|h\|\left( \int_{0}^{\tau_1}\left\lbrace  (\tau_1-\sigma)^{\alpha-1}-(\tau_2-\sigma)^{\alpha-1}\right\rbrace  d\sigma+\int_{\tau_1}^{\tau_2} (\tau_2-\sigma)^{\alpha-1} d\sigma\right)\nonumber \\
&\leq\|h\|\left( \left\lbrace  \tau_1^\alpha+(\tau_2-\tau_1)^\alpha-\tau_2^\alpha\right\rbrace 
+(\tau_2-\tau_1)^\alpha\right)\nonumber\\
&\leq2\|h\| (\tau_2-\tau_1)^\alpha.
\end{align}
Therefore,
\begin{equation}\label{Hye4.11}
\left| \int_{0}^{\tau_1} (\tau_1-\sigma)^{\alpha-1} g(\sigma,\omega(\sigma)) d\sigma-\int_{\tau_1}^{\tau_2} (\tau_2-\sigma)^{\alpha-1}g(\sigma,\omega(\sigma)) d\sigma\right|\to 0,\;\text{as}\; |\tau_1-\tau_2|\to 0,\;\omega\in \mathcal{S}.
\end{equation}
Therefore it follows from \eqref{ie4.9}, \eqref{Hyee4.10} and \eqref{Hye4.11} that
$$|(\mathcal{F}_2\omega)(\tau_1)-(\mathcal{F}_2\omega)(\tau_2)|\to 0,\; \text{as}\; |\tau_1-\tau_2|\to 0,\;\text{for each}\; \omega\in \mathcal{S}.$$
\noindent This proves $\mathcal{F}_2(\mathcal{S})$ is equicontinious set in $\Omega$. Since $\mathcal{F}_2(\mathcal{S})$ is uniformly bounded and  equicontinious set in $\Omega$, by Ascoli-Arzela theorem  $\mathcal{F}_2 $ is completely continuous.

\noindent Step 3) Let any $\eta\in \mathcal{S}$. For $\omega\in \Omega$, consider the operator equation $ \omega=\mathcal{F}_2\omega \mathcal{F}_2\eta$. Our aim is to prove that $\omega\in \mathcal{S}$.

\noindent 
Using hypothesis (H1) and condition \eqref{HYe4.9}, we have
\begin{align*}
|\omega(\tau)|&=|(\mathcal{F}_1\omega)(\tau)||(\mathcal{F}_2\eta)(\tau)|\\
&\leq |f(\tau,\omega(\tau))||(\mathcal{F}_2\eta)(\tau))|\\
&\leq\left\lbrace |f(\tau,\omega(\tau))-f(\tau,0))|+|f(\tau,0)|\right\rbrace \left( \left| \frac{\omega_0}{f(0,\omega_0)}\right|  +\left[ 1-\alpha+\frac{T^\alpha}{1-\alpha}\right]\frac{\|h\|}{B(\alpha)}\right) \\
&\leq \left\lbrace L|\omega(\tau)|+M_f\right\rbrace \left( \left| \frac{\omega_0}{f(0,\omega_0)}\right|  +\left[ 1-\alpha+\frac{T^\alpha}{1-\alpha}\right]\frac{\|h\|}{B(\alpha)}\right),\;\tau\in J.
\end{align*}
This gives,
\begin{equation*}
|\omega(\tau)|\leq \dfrac {M_fL\left( \left| \dfrac{\omega_0}{f(0,\omega_0)}\right|  +\left[ 1-\alpha+\frac{T^\alpha}{1-\alpha}\right]\dfrac{\|h\|}{B(\alpha)}\right) }{1-L\left( \left| \dfrac{\omega_0}{f(0,\omega_0)}\right|  +\left[ 1-\alpha+\frac{T^\alpha}{1-\alpha}\right]\dfrac{\|h\|}{B(\alpha)}\right)}=R,\;\tau\in J.
\end{equation*}
Therefore,
$$\|\omega\|\leq R. $$
This proves $\omega\in S$.

\noindent Step 4) The constants $\alpha$ and $M$ of Lemma \ref{LM2.3} corresponding to the operators  $\mathcal{F}_1$ and $\mathcal{F}_2$ defined in equations \eqref{e4.6} and \eqref{e4.7} respectively are
$$\alpha=L\;\text{and}\; M=\left| \frac{\omega_0}{f(0,\omega_0)}\right|  +\left[ 1-\alpha+\frac{T^\alpha}{1-\alpha}\right]\frac{\|h\|}{B(\alpha)}.$$
By condition \eqref{b4.4}, it follows that
$$\alpha M=L \left( \left| \frac{\omega_0}{f(0,\omega_0)}\right|  +\left[ 1-\alpha+\frac{T^\alpha}{1-\alpha}\right]\frac{\|h\|}{B(\alpha)}\right) <1. $$
 
 \noindent From steps 1 to 4, it follows that  all the conditions of Lemma \ref{LM2.3} are satisfied. Therefore by applying it, the operator equation $$\omega=\mathcal{F}_1\omega \mathcal{F}_2\omega$$ has a fixed point in $\mathcal{S}$, which is a solution of ABC--hybrid FDEs \eqref{He1.1}--\eqref{He1.2}. This completes the proof of the Theorem.
\end{proof}
 \section{ABC-hybrid fractional differential Inequalities }
 \begin{theorem}\label{Hythm4.1}
 Let $f\in C\left( J\times\mathbb{R}, \mathbb{R}\backslash\left\lbrace 0\right\rbrace\right)$,\;$g\in  C(J\times\mathbb{R},\mathbb{R})$ and for each $\tau\in J$ the mapping  $\omega\to\dfrac{\omega}{f(\tau,\omega)}$ is increasing a.e. on $\mathbb{R}$. Let $v,w\in C(J)$  are such that  $$^{ABC}{_0\mathcal{D}}_{\tau}^\alpha\; \left( \dfrac{v(\cdot)}{f((\cdot),v(\cdot))}\right)\; ,\;^{ABC}{_0\mathcal{D}}_{\tau}^\alpha \left( \dfrac{w(\cdot)}{f((\cdot),w(\cdot))}\right)\in C(J) $$ and  satisfies the ABC-hybrid fractional differential inequalities, 
 \begin{itemize}
 \item[\normalfont({i})] $^{ABC}{_0\mathcal{D}^\alpha_\tau}\left[ \dfrac{v(\tau)}{f(\tau,v(\tau))}\right]\leq g(\tau,v(\tau)), \;\text{a.e.}\;\tau\in J,$
 \item[\normalfont({ii})] $^{ABC}{_0\mathcal{D}^\alpha_\tau}\left[ \dfrac{w(\tau)}{f(\tau,w(\tau))}\right]\geq g(\tau,w(\tau)), \;\text{a.e.}\;\tau\in J, $
 \end{itemize}
 where one of above inequality being is strict.
 
 \noindent Then $v(0)<w(0)$ implies $$v(\tau)<w(\tau),\;\tau\in J.$$
 \end{theorem}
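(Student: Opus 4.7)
The plan is to argue by contradiction, using Lemma~\ref{Lm2.4} as a fractional maximum-principle device. Suppose the conclusion fails. Since $v(0)<w(0)$ and $v,w\in C(J)$, continuity yields a smallest $\tau_0\in(0,T]$ with $v(\tau_0)=w(\tau_0)$ while $v(\tau)<w(\tau)$ for every $\tau\in[0,\tau_0)$.

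Introduce the auxiliary function
\[
m(\tau)=\dfrac{v(\tau)}{f(\tau,v(\tau))}-\dfrac{w(\tau)}{f(\tau,w(\tau))},\qquad \tau\in J.
\]
The monotonicity hypothesis on $\omega\mapsto \omega/f(\tau,\omega)$ upgrades $v(\tau)<w(\tau)$ on $[0,\tau_0)$ to $m(\tau)<0$ there, while $v(\tau_0)=w(\tau_0)$ forces $m(\tau_0)=0$ (since the two denominators also agree at $\tau_0$). The regularity assumed on $v/f(\cdot,v)$ and $w/f(\cdot,w)$ transfers to $m$, so that $m$ is differentiable on $J$ with $^{ABC}{_0\mathcal{D}}_\tau^\alpha m\in C(J)$. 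Lemma~\ref{Lm2.4} then yields
\[
^{ABC}{_0\mathcal{D}}_\tau^\alpha\!\left(\dfrac{v(\cdot)}{f(\cdot,v(\cdot))}\right)\!(\tau_0)\ \geq\ {^{ABC}{_0\mathcal{D}}_\tau^\alpha}\!\left(\dfrac{w(\cdot)}{f(\cdot,w(\cdot))}\right)\!(\tau_0).
\]

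To close the argument, assume without loss of generality that inequality (i) is the strict one. Evaluating (i) and (ii) at $\tau_0$ and using $v(\tau_0)=w(\tau_0)$ gives
\[
^{ABC}{_0\mathcal{D}}_\tau^\alpha\!\left(\dfrac{v(\cdot)}{f(\cdot,v(\cdot))}\right)\!(\tau_0)\ <\ g(\tau_0,v(\tau_0))\ =\ g(\tau_0,w(\tau_0))\ \leq\ {^{ABC}{_0\mathcal{D}}_\tau^\alpha}\!\left(\dfrac{w(\cdot)}{f(\cdot,w(\cdot))}\right)\!(\tau_0),
\]
which directly contradicts the inequality obtained from Lemma~\ref{Lm2.4}. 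Therefore $v(\tau)<w(\tau)$ on all of $J$.

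The main obstacle will be verifying that $m$ meets every hypothesis of Lemma~\ref{Lm2.4}, especially the differentiability requirement; this requires combining the regularity built into the definition of solution (the map $u\mapsto u/f(\tau,u)$ being absolutely continuous) with the assumed continuity of the ABC-derivatives of $v/f(\cdot,v)$ and $w/f(\cdot,w)$. Beyond this bookkeeping step, the proof reduces entirely to the ABC fractional maximum principle, and the strict inequality in (i) (or, symmetrically, in (ii)) provides precisely the gap needed to force the contradiction.
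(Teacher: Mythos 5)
Your proposal is correct and follows essentially the same route as the paper: a contradiction at the first touching point $\tau_0$, the auxiliary function $m = v/f(\cdot,v) - w/f(\cdot,w)$, an application of Lemma \ref{Lm2.4} to get $^{ABC}{_0\mathcal{D}}_\tau^\alpha m(\tau_0)\geq 0$, and the strict inequality in (i) or (ii) to force the contradiction. The only cosmetic difference is how the final chain of inequalities is arranged; the logic is identical.
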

  \begin{proof}
 Suppose that the conclusion of the theorem does not holds.  Since $v,w\in C(J)$ there exits  $\tau_0\in J$ such that $$v(\tau_0)=w(\tau_0)\; \text{and}\; v(\tau)< w(\tau)\;\text{for all} \; \tau\in[0,\tau_0).$$
 Then 
   $$\dfrac{v(\tau_0)}{f(\tau_0,v(\tau_0))}= \dfrac{w(\tau_0)}{f(\tau_0,w(\tau_0))}$$ 
   and using increasing property of the mapping $\omega\to\dfrac{\omega}{f(\tau,\omega)}$, we have
    $$\dfrac{v(\tau)}{f(\tau,v(\tau))}\leq \dfrac{w(\tau)}{f(\tau,w(\tau))}, \tau\in [0,\tau_0).$$
   Let $V(\tau)=\dfrac{v(\tau)}{f(\tau,v(\tau))},\; W(\tau)=\dfrac{w(\tau)}{f(\tau,w(\tau))},\; \tau\in J$. Define $m(\tau)=V(\tau)-W(\tau),\; \tau\in J$.
Then  $m,\;^{ABC}{_0\mathcal{D}^\alpha_\tau}m\in C(J)$. Further, $\tau_0\in J$ is such that
 $$m(\tau_0)=0\; \text{and}\; m(\tau)\leq 0\;\text{for all} \; \tau\in[0,\tau_0).$$ Since $m$ satisfies all assumptions of Lemma \ref{Lm2.4}, we get,  $^{ABC}{_0\mathcal{D}}_{\tau}^\alpha m(\tau_0)\geq 0$. 
 
 \noindent This gives $$^{ABC}{_0\mathcal{D}}_{\tau}^\alpha V(\tau_0)\geq\; ^{ABC}{_0\mathcal{D}}_{\tau}^\alpha W(\tau_0).$$
 
 \noindent Suppose that the inequality (i)  is strict. Then we get
 $$g\left( \tau_0, v(\tau_0)\right)>\; ^{ABC}{_0\mathcal{D}}_{\tau}^\alpha V(\tau_0)\geq\; ^{ABC}{_0\mathcal{D}}_{\tau}^\alpha W(\tau_0) \geq g\left( \tau_0, w(\tau_0)\right),$$
 
 \noindent which is  contradiction to  $v(\tau_0)=w(\tau_0)$. Therefore we must have $$v(\tau)<w(\tau),\;\text{for all}\; \tau\in J.$$
 This completes the proof of theorem. 
  \end{proof}

  \begin{theorem}\label{Hythm4.2}
Assume  that the conditions of Theorem \ref{Hythm4.1} holds with nonstrict inequalities {\normalfont(i)} and {\normalfont(ii)}. Suppose that
\begin{align}\label{c5.1}
 g(\tau,\omega)-g(\tau,\eta)\leq& L\left( \dfrac{\omega}{f(\tau,\omega)}-\dfrac{\eta}{f(\tau,\eta)}\right),\text{for all }\;\tau\in J;\; \omega,\eta\in \mathbb{R}\;\text{with}\;\omega\geq \eta,\;
 0<L<\dfrac{B(\alpha)}{1-\alpha}
\end{align}
Then $v(0)\leq w(0)$ implies
$$v(\tau)\leq w(\tau),\; \text{for all}\;\tau\in J.$$
  \end{theorem}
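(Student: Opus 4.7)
The plan is to reduce to Theorem~\ref{Hythm4.1} via the classical $\epsilon$-perturbation trick. I will construct a one-parameter family $\{w_\epsilon\}_{\epsilon>0}$ with $w_\epsilon \geq w$, each satisfying a strict version of inequality~(ii); Theorem~\ref{Hythm4.1} then gives $v(\tau) < w_\epsilon(\tau)$ on $J$, and letting $\epsilon \downarrow 0$ yields $v \leq w$.

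The key ingredient is a positive function $q \in C(J)$ with $q(0)=1$ and $^{ABC}{_0\mathcal{D}}_\tau^\alpha q(\tau) > L\,q(\tau)$ on $J$. Take $q(\tau) = \mathbb{E}_\alpha(\lambda \tau^\alpha) = \mathbb{E}_{\alpha,1}^1(\lambda \tau^\alpha)$ for any fixed $\lambda > 0$. Lemma~\ref{ABCLm2.2} with $\beta = \sigma = 1$ gives
\[
{^{ABC}{_0\mathcal{D}}_\tau^\alpha}\, q(\tau) = \frac{B(\alpha)}{1-\alpha}\,\mathbb{E}_{\alpha,1}^2(\lambda \tau^\alpha).
\]
Comparing the defining series term-by-term, $\mathbb{E}_{\alpha,1}^2(z) \geq \mathbb{E}_\alpha(z)$ for $z \geq 0$ (since $(2)_k/k! = k+1 \geq 1$). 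Combined with the hypothesis $L < B(\alpha)/(1-\alpha)$, this produces $^{ABC}{_0\mathcal{D}}_\tau^\alpha q(\tau) > L\,q(\tau)$ on all of $J$, strictly even at $\tau=0$.

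Writing $W(\tau)=w(\tau)/f(\tau,w(\tau))$, define $w_\epsilon$ implicitly by
\[
W_\epsilon(\tau) := \frac{w_\epsilon(\tau)}{f(\tau, w_\epsilon(\tau))} = W(\tau) + \epsilon\, q(\tau).
\]
Strict monotonicity of $u \mapsto u/f(\tau,u)$ makes this well-defined for $\epsilon$ small, with $w_\epsilon \geq w$ and $w_\epsilon \to w$ in $C(J)$ as $\epsilon \downarrow 0$. Using hypothesis~(ii) for $w$, the Lipschitz-type condition~\eqref{c5.1} applied to the pair $(w_\epsilon, w)$ (which supplies $g(\tau,w)\geq g(\tau,w_\epsilon)-L\epsilon\,\mathbb{E}_\alpha(\lambda\tau^\alpha)$), and the above key bound,
\begin{align*}
{^{ABC}{_0\mathcal{D}}_\tau^\alpha}\, W_\epsilon(\tau)
&\geq g(\tau, w(\tau)) + \epsilon\,\tfrac{B(\alpha)}{1-\alpha}\,\mathbb{E}_{\alpha,1}^2(\lambda \tau^\alpha) \\
&\geq g(\tau, w_\epsilon(\tau)) + \epsilon\Bigl[\tfrac{B(\alpha)}{1-\alpha}\,\mathbb{E}_{\alpha,1}^2(\lambda \tau^\alpha) - L\,\mathbb{E}_\alpha(\lambda \tau^\alpha)\Bigr] \\
&> g(\tau, w_\epsilon(\tau)),
\end{align*}
so $w_\epsilon$ satisfies the strict form of~(ii). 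Since $v(0) \leq w(0)$ together with strict monotonicity of the map forces $v(0) < w_\epsilon(0)$, Theorem~\ref{Hythm4.1} yields $v(\tau) < w_\epsilon(\tau)$ on $J$. Passing to the limit $\epsilon \downarrow 0$ completes the argument.

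The main obstacle I anticipate is the technical justification that $w_\epsilon \in C(J)$ and $^{ABC}{_0\mathcal{D}}_\tau^\alpha W_\epsilon \in C(J)$, so that Theorem~\ref{Hythm4.1} legitimately applies to the pair $(v,w_\epsilon)$. Both properties follow from continuity of the inverse of the increasing map $u \mapsto u/f(\tau,u)$ and from smoothness of $q$, but one must in particular verify that the range of this monotone map contains a neighbourhood of $W(\tau)$ for $\epsilon$ small enough, uniformly in $\tau \in J$.
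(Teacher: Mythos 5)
Your proof is correct and follows essentially the same route as the paper: the identical perturbation $\dfrac{w_\epsilon(\tau)}{f(\tau,w_\epsilon(\tau))}=\dfrac{w(\tau)}{f(\tau,w(\tau))}+\epsilon\,\mathbb{E}_\alpha(\lambda\tau^\alpha)$, the same use of condition \eqref{c5.1} together with the lower bound on $^{ABC}{_0\mathcal{D}}_{\tau}^\alpha\mathbb{E}_\alpha$, then Theorem \ref{Hythm4.1} and the limit $\epsilon\downarrow 0$. The only differences are cosmetic: you rederive the key inequality \eqref{e3.7} from Lemma \ref{ABCLm2.2} by a term-by-term series comparison instead of citing it, and you explicitly flag the well-definedness of $w_\epsilon$ (invertibility of the increasing map $u\mapsto u/f(\tau,u)$ near $W(\tau)$), a point the paper leaves implicit.
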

  \begin{proof}
  For any fix $\epsilon>0$, we define
  \begin{equation}\label{eq3.3}
 \dfrac{w_\epsilon(\tau)}{f(\tau,w_\epsilon(\tau))}=\dfrac{w(\tau)}{f(\tau,w(\tau))}+\epsilon\mathbb{E}_{\alpha}(\tau^\alpha),\; \tau\in J.
  \end{equation}
  This gives, for $\tau=0$ $$\dfrac{w_\epsilon(0)}{f(0,w_\epsilon(0))}=\dfrac{w(0)}{f(0,w(0))}+\epsilon> \dfrac{w(0)}{f(0,w(0))}
  $$
  Again, using the Lipschitz condition on $g$ and Eq.\eqref{eq3.3},  we have
$$g(\tau,w_\epsilon(\tau)) -g(\tau,w(\tau))\leq L\left( \dfrac{w_\epsilon(\tau)}{f(\tau,w_\epsilon(\tau))}-\dfrac{w(\tau)}{f(\tau,w(\tau))}\right) =L\epsilon\mathbb{E}_{\alpha}(\tau^\alpha),\;\tau\in J.$$
  Using  condition on $L$, from above inequality we obtain, 
  \begin{equation}\label{e3.4}
  g(\tau,w(\tau))\geq g(\tau,w_\epsilon(\tau))-L\epsilon\mathbb{E}_{\alpha}(\tau^\alpha)>g(\tau,w_\epsilon(\tau))-\dfrac{B(\alpha)}{1-\alpha}\epsilon\mathbb{E}_{\alpha}(\tau^\alpha),\; \tau\in J.
  \end{equation}
In the proof of Theorem 3.6 \cite{KKSS} it is produced that,
    \begin{equation}\label{e3.7}
  ^{ABC}{_0\mathcal{D}}_\tau^\alpha\left( \mathbb{E}_{\alpha}(\tau^\alpha)\right) \geq \dfrac{B(\alpha)}{1-\alpha}\mathbb{E}_{\alpha}(\tau^\alpha),\tau\in J.
  \end{equation}
  Since $^{ABC}{_0\mathcal{D}}_{\tau}^\alpha \dfrac{w(\cdot)}{f(\cdot,w(\cdot))},\;^{ABC}{_0\mathcal{D}}_{\tau}^\alpha\mathbb{E}_{\alpha}\in C(J),$ we have $^{ABC}{_0\mathcal{D}}_{\tau}^\alpha \dfrac{w_\epsilon(\cdot)}{f(\cdot,w_\epsilon(\cdot))}\in C(J)$. Thus using the inequalities (ii), \eqref{e3.4} and \eqref{e3.7}, for any $\tau\in J$ we have
  \begin{align*}
  ^{ABC}{_0\mathcal{D}}_\tau^\alpha \left( \dfrac{w_\epsilon(\tau)}{f(\tau,w_\epsilon(\tau))}\right) &=\;^{ABC}{_0\mathcal{D}}_\tau^\alpha \left[ \dfrac{w(\tau)}{f(\tau,w(\tau))}+\epsilon\mathbb{E}_{\alpha}(\tau^\alpha)\right]\\
  &=\;^{ABC}{_0\mathcal{D}}_\tau^\alpha  \left( \dfrac{w(\tau)}{f(\tau,w(\tau))}\right) +\epsilon\;^{ABC}{_0\mathcal{D}}_\tau^\alpha\mathbb{E}_{\alpha}(\tau^\alpha)\\
  &\geq g(\tau,w(\tau)) +\epsilon\dfrac{B(\alpha)}{1-\alpha}\mathbb{E}_{\alpha}(\tau^\alpha)\\
  &> g(\tau,w_\epsilon(\tau))-\dfrac{B(\alpha)}{1-\alpha}\epsilon\mathbb{E}_{\alpha}(\tau^\alpha)+\epsilon\dfrac{B(\alpha)}{1-\alpha}\mathbb{E}_{\alpha}(\tau^\alpha)\\
  & =g(\tau,w_\epsilon(\tau))
  \end{align*}
  Therefore, 
  $$^{ABC}{_0\mathcal{D}}_{\tau}^\alpha \left( \dfrac{w_\epsilon(\tau)}{f(\tau,w_\epsilon(\tau))}\right)>  g\left( t, w_\epsilon(\tau)\right),\; \tau\in J.$$
  Since $v(0)<w_\epsilon(0)$, by application of Theorem  \ref{Hythm4.1} with $w(\tau)=w_\epsilon(\tau)$, for each $\epsilon>0$ we have
  $$v(\tau)<w_\epsilon(\tau),\; \tau\in J. $$
  Taking limit as $\epsilon\to 0$, in the above inequality and utilizing equ. \eqref{eq3.3} we obtain
  $$ v(\tau)\leq w(\tau),\; \tau\in J.$$
\end{proof}
  \section{Existence of Maximal and Minimal solutions}
  In this section, we shall prove the existence of maximal and minimal solutions for the ABC--hybrid FDEs \eqref{He1.1}-\eqref{He1.2} on $J $.
  \begin{definition}
A solution $ r$ of the {\normalfont ABC--hybrid FDEs} \eqref{He1.1}-\eqref{He1.2} is said to be maximal if for any other solution $\omega$ to the {\normalfont ABC--hybrid FDEs} \eqref{He1.1}-\eqref{He1.2} one has
$\omega(\tau)\leq \omega(\tau)$ for all $\tau\in J$. Again, a solution $\rho$ of the {\normalfont ABC--hybrid FDEs} \eqref{He1.1}-\eqref{He1.2} is said to be minimal if $\rho(\tau)\leq \omega(\tau)$ for all $\tau \in J$, where $\omega$ is any
solution of the {\normalfont ABC--hybrid FDEs} \eqref{He1.1}-\eqref{He1.2} on $ J$.
\end{definition}

We give the proof only for the existence of maximal solution of the ABC--hybrid FDEs \eqref{He1.1}-\eqref{He1.2}, as the proof of existence of minimal solution one can complete on similar lines. Given an arbitrary small real number $\epsilon>0$, consider the following  ABC--hybrid FDEs
 \begin{align}
 ^{ABC}{_0\mathcal{D}^\alpha_\tau}\left[ \dfrac{\omega(\tau)}{f(\tau,\omega(\tau))}\right] &=g(\tau,\omega(\tau))+\epsilon, \;\text{a.e.}\;\tau\in J,\label{Hye6.1}\\
 \omega(0)&=\omega_0+\epsilon\label{Hye6.2},
 \end{align}
 where $g\in \mathcal{C}$ is such that $g(0,\omega_0+\epsilon)=0$.
 \begin{theorem}\label{Hyth6.1}
Assume that the hypotheses {\normalfont(H1)-(H2)} and the condition \eqref{b4.4}holds. Then  for every small $\epsilon>0$, the {\normalfont ABC--hybrid FDEs} \eqref{Hye6.1}--\eqref{Hye6.2} possesses a solution on $J$.
 \end{theorem}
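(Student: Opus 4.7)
The natural strategy is to view the perturbed system \eqref{Hye6.1}--\eqref{Hye6.2} as an instance of the ABC--hybrid FDE \eqref{He1.1}--\eqref{He1.2} with data $\tilde{g}(\tau,\omega) := g(\tau,\omega) + \epsilon$ and $\tilde{\omega}_0 := \omega_0 + \epsilon$, and then invoke the existence result Theorem \ref{Hythm3.2}. So the plan is to verify that hypotheses (H1), (H2) and the smallness condition \eqref{b4.4} survive this perturbation for $\epsilon$ sufficiently small.

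First, (H1) concerns only $f$, which is not modified, so it transfers verbatim. For (H2), note that $\tilde{g} \in \mathcal{C}$ (continuity and measurability in each variable are preserved under addition of a constant) and
\[
|\tilde{g}(\tau,\omega(\tau))| \leq |g(\tau,\omega(\tau))| + \epsilon \leq h(\tau) + \epsilon =: \tilde{h}(\tau),
\]
so (H2) holds with the new bound $\tilde{h} \in C(J,\mathbb{R}^+)$, and $\|\tilde{h}\| = \|h\| + \epsilon$. The compatibility $\tilde{g}(0,\tilde{\omega}_0) = g(0,\omega_0+\epsilon) + \epsilon$ is inherited from the standing assumption of the theorem on $g$.

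The main point is to verify the smallness condition \eqref{b4.4} for the perturbed data, namely
\[
L\left( \left| \frac{\omega_0+\epsilon}{f(0,\omega_0+\epsilon)}\right| + \left[ 1-\alpha+\frac{T^\alpha}{1-\alpha}\right]\frac{\|h\|+\epsilon}{B(\alpha)}\right) < 1.
\]
Since the original inequality \eqref{b4.4} is strict, and since $f$ is continuous at $(0,\omega_0)$ with $f(0,\omega_0)\neq 0$, the map $\epsilon \mapsto (\omega_0+\epsilon)/f(0,\omega_0+\epsilon)$ is continuous at $\epsilon=0$. Likewise, $\|h\|+\epsilon \to \|h\|$ as $\epsilon \to 0^+$. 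Hence the left-hand side above depends continuously on $\epsilon$ at $\epsilon=0$, and by strictness of \eqref{b4.4} the inequality persists for all sufficiently small $\epsilon>0$.

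Once these three conditions are checked, a direct application of Theorem \ref{Hythm3.2} to the pair $(\tilde{g},\tilde{\omega}_0)$ yields a solution $\omega_\epsilon \in C(J,\mathbb{R})$ of \eqref{Hye6.1}--\eqref{Hye6.2}. The only delicate step is the continuity argument preserving \eqref{b4.4}; everything else is a mechanical transcription of the existence proof. There are no essentially new difficulties, so this theorem functions as a setup lemma for the subsequent construction of the maximal solution via $\epsilon \to 0^+$.
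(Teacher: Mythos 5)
Your proposal is correct and follows essentially the same route as the paper: the paper likewise reduces \eqref{Hye6.1}--\eqref{Hye6.2} to an instance of Theorem \ref{Hythm3.2} by asserting that the smallness condition \eqref{b4.4} persists with $\omega_0+\epsilon$ and $\|h\|+\epsilon$ for $0<\epsilon\leq\epsilon_0$, and you merely make explicit the continuity argument (using $f(0,\omega_0+\epsilon)\neq 0$) that justifies this persistence. The only caveat, present in the paper as well, is that the perturbed forcing term $g(\tau,\omega)+\epsilon$ does not vanish at $\tau=0$, so the compatibility condition used in Theorem \ref{HYThm3.1} is not literally satisfied; this is an imprecision of the source, not a gap you introduced.
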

 \begin{proof}
 By hypothesis,
 $$L\left( \left| \frac{\omega_0}{f(0,\omega_0)}\right|  +\left[ 1-\alpha+\frac{T^\alpha}{1-\alpha}\right]\frac{\|h\|}{B(\alpha)}\right) <1$$
 Then we can find $\epsilon_0>0$ such that
 \begin{equation*}
    L\left(\left| \frac{\omega_0+\epsilon}{f(0,\omega_0+\epsilon)}\right|  +\left[ 1-\alpha+\frac{T^\alpha}{1-\alpha}\right]\frac{\|h\|+\epsilon}{B(\alpha)}\right)<1,\;\text{for}\; 0<\epsilon\leq \epsilon_0.
    \end{equation*}
    
    \noindent Following simillar steps as in the proof of Theorem \ref{Hythm3.2}, one can complete the remaining part of the proof.
  \end{proof}
 \begin{theorem}\label{Hythm6.2}
  Assume that the hypotheses {\normalfont(H1)-(H2)} and the condition \eqref{b4.4}holds. Then  for each small $\epsilon>0$, the {\normalfont ABC--hybrid FDEs} \eqref{He1.1}--\eqref{He1.2} possesses a maximal solution on $J$.
 \end{theorem}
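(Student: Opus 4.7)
The plan is to construct a maximal solution $r$ of \eqref{He1.1}--\eqref{He1.2} as the monotone uniform limit of the solutions $\omega(\cdot,\epsilon)$ of the perturbed system \eqref{Hye6.1}--\eqref{Hye6.2} as $\epsilon\to 0^{+}$. By Theorem~\ref{Hyth6.1}, for each $\epsilon\in(0,\epsilon_0]$ such an $\omega(\cdot,\epsilon)$ exists and lies in the bounded, closed and convex set $\mathcal{S}$ built in the proof of Theorem~\ref{Hythm3.2}.

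The first substantive step is monotonicity of this family in $\epsilon$. For $0<\epsilon_2<\epsilon_1\le\epsilon_0$, I would apply Theorem~\ref{Hythm4.1} with $v=\omega(\cdot,\epsilon_2)$, $w=\omega(\cdot,\epsilon_1)$ and the modified right-hand side $\tilde g(\tau,u):=g(\tau,u)+\epsilon_2$: then $v$ satisfies (i) with equality, while for $w$ one has
\[
{}^{ABC}{_0\mathcal{D}^\alpha_\tau}\!\left(\frac{w(\tau)}{f(\tau,w(\tau))}\right)=g(\tau,w(\tau))+\epsilon_1>g(\tau,w(\tau))+\epsilon_2=\tilde g(\tau,w(\tau)),
\]
so (ii) is strict. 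Together with $v(0)=\omega_0+\epsilon_2<\omega_0+\epsilon_1=w(0)$ this yields $\omega(\tau,\epsilon_2)<\omega(\tau,\epsilon_1)$ on $J$, so the family is pointwise decreasing as $\epsilon\downarrow 0$. Combined with uniform boundedness ($\omega(\cdot,\epsilon)\in\mathcal{S}$) and an equicontinuity estimate --- obtained by rewriting $\omega(\tau,\epsilon)=f(\tau,\omega(\tau,\epsilon))\,\Psi_\epsilon(\tau)$, where $\Psi_\epsilon$ is the analogue of $\mathcal{F}_2$ for the perturbed data (equicontinuous and bounded by $M$ uniformly in $\epsilon$ by Step~2 of the proof of Theorem~\ref{Hythm3.2}), and using the Lipschitz bound on $f$ together with the smallness $LM<1$ to absorb the resulting self-referential term $LM\,|\omega(\tau_1,\epsilon)-\omega(\tau_2,\epsilon)|$ into the left-hand side --- Arzel\`a--Ascoli plus monotonicity produces a continuous limit $r\in C(J)$ with $\omega(\cdot,\epsilon)\to r$ uniformly on $J$.

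To verify that $r$ actually solves \eqref{He1.1}--\eqref{He1.2}, I would pass to the limit $\epsilon\to 0^{+}$ in the integral representation of $\omega(\cdot,\epsilon)$ supplied by Theorem~\ref{HYThm3.1} applied to the perturbed data. Uniform convergence of $\omega(\cdot,\epsilon)$, joint continuity of $f$ and $g$, and the Lebesgue dominated convergence theorem (with integrable majorant $h(\sigma)+\epsilon_0$ inherited from (H2)) show that $r$ satisfies equation \eqref{He3.1}; by the equivalence stated in Theorem~\ref{HYThm3.1}, $r$ is then a solution of \eqref{He1.1}--\eqref{He1.2}.

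Maximality is immediate from Theorem~\ref{Hythm4.1}: given any solution $\omega$ of \eqref{He1.1}--\eqref{He1.2}, taking $v=\omega$ (satisfying (i) with equality) and $w=\omega(\cdot,\epsilon)$ (satisfying (ii) strictly, with strict excess $\epsilon>0$) together with $v(0)=\omega_0<\omega_0+\epsilon=w(0)$ forces $\omega(\tau)<\omega(\tau,\epsilon)$ on $J$ for every $\epsilon\in(0,\epsilon_0]$, and letting $\epsilon\to 0^{+}$ gives $\omega(\tau)\le r(\tau)$. The main obstacle I anticipate is the equicontinuity step for $\{\omega(\cdot,\epsilon)\}$: because the fixed-point relation is implicit --- the unknown appears both in the outer factor $f(\tau,\omega(\tau,\epsilon))$ and inside the integrand --- one must exploit $LM<1$ with care to control the self-referential Lipschitz term before Arzel\`a--Ascoli applies; once that is in hand, the monotone convergence and comparison arguments close routinely, and the dual construction (replacing $+\epsilon$ by $-\epsilon$) yields the minimal solution on the same lines.
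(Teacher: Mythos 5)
Your proposal is correct and follows essentially the same route as the paper: perturb the data by $\epsilon$, invoke Theorem \ref{Hyth6.1} for existence of $\omega(\cdot,\epsilon_n)$, use Theorem \ref{Hythm4.1} to obtain a decreasing family bounded below by any solution of \eqref{He1.1}--\eqref{He1.2}, establish uniform boundedness and equicontinuity so that the monotone limit is uniform, pass to the limit in the integral representation, and let $\epsilon\to 0$ in the comparison inequality to get maximality. Your handling of the equicontinuity step is in fact more careful than the paper's, which asserts $|f(\tau_1,\omega(\tau_1,\epsilon_n))-f(\tau_2,\omega(\tau_2,\epsilon_n))|\to 0$ only for each fixed $n$ and never confronts the self-referential Lipschitz term that you propose to absorb using the smallness condition $LM<1$.
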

\begin{proof}
Let $\left\lbrace \epsilon_n\right\rbrace_{n=0}^\infty$ be a decreasing sequence of positive numbers converging to $0$ where  $\epsilon_0$ is such that,
\begin{equation*}
    L\left(\left| \frac{\omega_0+\epsilon_0}{f(0,\omega_0+\epsilon_0)}\right|  +\left[ 1-\alpha+\frac{T^\alpha}{1-\alpha}\right]\frac{\|h\|+\epsilon_0}{B(\alpha)}\right)<1.
    \end{equation*} 
    Using $\epsilon_n\leq\epsilon_0,\;n\in\mathbb{N}\cup\left\lbrace 0 \right\rbrace $, it is easy to verify that
\begin{equation*}
 L\left(\left| \frac{\omega_0+\epsilon_n}{f(0,\omega_0+\epsilon_n)}\right|  +\left[ 1-\alpha+\frac{T^\alpha}{1-\alpha}\right]\frac{\|h\|+\epsilon_n}{B(\alpha)}\right)<1,\;\text{for all} \;n\in N\cup\left\lbrace 0\right\rbrace.
\end{equation*}
\noindent Due to above condition,  by Theorem \ref{Hyth6.1}, for each $n\in N\cup \left\lbrace 0 \right\rbrace$,   ABC--hybrid FDEs 
\begin{align}
^{ABC}{_0\mathcal{D}^\alpha_\tau}\left[ \dfrac{\omega(\tau)}{f(\tau,\omega(\tau))}\right] &=g(\tau,\omega(\tau))+\epsilon_n, \;\text{a.e.}\;\tau\in J,\label{He5.3}\\
\omega(0)&=\omega_0+\epsilon_n,\label{He5.4}
\end{align}
 has a solution, say $\omega(\tau,\epsilon_n)$, hence we get
 \begin{align}
 ^{ABC}{_0\mathcal{D}^\alpha_\tau}\left[ \dfrac{\omega(\tau,\epsilon_n)}{f(\tau,\omega(\tau,\epsilon_n))}\right] &=g(\tau,\omega(\tau,\epsilon_n))+\epsilon_n> g(\tau,\omega(\tau,\epsilon_n))\;\text{a.e.}\;\tau\in J,\label{He5.31}\\
 \omega(0,\epsilon_n)&=\omega_0+\epsilon_n,\label{He5.41}
 \end{align}
 The equivalent integral equation of above ABC--hybrid FDEs is 
 \begin{align}\label{e6.7}
\omega(\tau,\epsilon_n)=f(\tau,\omega(\tau,\epsilon_n))&\left[ \dfrac{\omega_0}{f(0,\omega_0+\epsilon_n)}+\dfrac{1-\alpha}{B(\alpha)}  g(\tau,\omega(\tau,\epsilon_n))\right.\nonumber\\
&\left.+\dfrac{\alpha}{B(\alpha)(1-\alpha)}\int_{0}^{\tau}(\tau-\sigma)^{\alpha-1}g(\sigma,\omega(\sigma,\epsilon_n)+\epsilon_n)d\sigma \right]
 \end{align}
Let  $u$ be any solution of ABC--hybrid FDEs \eqref{He1.1}--\eqref{He1.2}, hence we get
\begin{align}
^{ABC}{_0\mathcal{D}^\alpha_\tau}\left[ \dfrac{u(\tau)}{f(\tau,u(\tau))}\right] &=g(\tau,u(\tau)), \;\text{a.e.}\;\tau\in J,\label{He6.7}\\
u(0)&=\omega_0,\label{He6.8}
\end{align}
 Noting that,  $ \omega(0,\epsilon_n)<u(0)$ for all $n\in\mathbb{N}\cup\left\lbrace 0\right\rbrace $. Therefore using comparison Theorem \ref{Hythm4.1}, we have
  \begin{equation}\label{e5.6}
  u(\tau)<\omega(\tau,\epsilon_n),\tau\in J,\; n\in\mathbb{N}\cup\left\lbrace 0 \right\rbrace. 
  \end{equation} 
Let $\omega(\tau,\epsilon_m),\;\omega(\tau,\epsilon_n)$ be the solutions of ABC--hybrid FDEs \eqref{He5.3}-\eqref{He5.4} corresponding to the $m^{th},\;n^{th}$ term of the sequence  $\left\lbrace \epsilon_n\right\rbrace_{n=0}^\infty$, with $m>n$. Therefore we have,
\begin{align*}
\omega(0,\epsilon_m)=\omega_0+\epsilon_m&<\omega_0+\epsilon_n=\omega(0,\epsilon_n)\\
 ^{ABC}{_0\mathcal{D}^\alpha_\tau}\left[ \dfrac{\omega(\tau,\epsilon_n)}{f(\tau,\omega(\tau,\epsilon_n))}\right] &=g(\tau,\omega(\tau,\epsilon_n))+\epsilon_n\\
  ^{ABC}{_0\mathcal{D}^\alpha_\tau}\left[ \dfrac{\omega(\tau,\epsilon_m)}{f(\tau,\omega(\tau,\epsilon_m))}\right] &\leq g(\tau,\omega(\tau,\epsilon_m))+\epsilon_n
\end{align*} $$ $$
Applying Lemma \ref{Hythm4.1} to the above set of inequalities, we get 
$$\omega(\tau,\epsilon_m) <\omega(\tau,\epsilon_n).$$ This verifies that $\omega(\tau,\epsilon_m)$ decreasing sequence bounded bellow by any solution of ABC--hybrid FDEs \eqref{He1.1}--\eqref{He1.2}. Therefore  $\omega(\tau)=\lim\limits_{n\to\infty}\omega(\tau,\epsilon_n)$ exists on $J$. We show that  this converges is unoform on $J$. Therefore, it is enough to prove that the sequence $ \left\lbrace \omega(\tau, \epsilon_n) \right\rbrace$ is
 equicontinuous in $C(J, R)$. Let $ \tau_1, \tau_2 \in J$ with $\tau_1 < \tau_2 $ be arbitrary. Then,
 \begin{align}\label{ie6.6}
 &|\omega(\tau_1,\epsilon_n)-\omega(\tau_2,\epsilon_n)|\leq \left( \dfrac{|\omega_0+\epsilon_n|}{|f(0,\omega_0+\epsilon_n)|}+\dfrac{1-\alpha}{B(\alpha)}\epsilon_n\right) |f(\tau_1,\omega(\tau_1,\epsilon_n))-f(\tau_2,\omega(\tau_2,\epsilon_n))|\nonumber\\
 &\;+\dfrac{1-\alpha}{B(\alpha)}|f(\tau_1,\omega(\tau_1,\epsilon_n))g(\tau_1,\omega(\tau_1,\epsilon_n))-f(\tau_2,\omega(\tau_2,\epsilon_n))g(\tau_2,\omega(\tau_2,\epsilon_n))|\nonumber\\
 &\;+\dfrac{\alpha}{B(\alpha)(1-\alpha)}\left| f(\tau_1,\omega(\tau_1,\epsilon_n))\int_{0}^{\tau_1}(\tau_1-\sigma)^{\alpha-1} \left\lbrace g(\sigma,\omega(\sigma,\epsilon_n))+\epsilon_n\right\rbrace  d\sigma\right.\nonumber\\
&\left.\qquad-f(\tau_2,\omega(\tau_2,\epsilon_n))\int_{0}^{\tau_2}(\tau_2-\sigma)^{\alpha-1}\left\lbrace  g(\sigma,\omega(\sigma,\epsilon_n))+\epsilon_n\right\rbrace d\sigma\right|.
 \end{align}
 Since $ f,g$  are continuous on compact set $J\times[-R,R]$, they are  uniformly continuous there. Hence, for each $n\in\mathbb{N}$,
 \begin{align}
 |f(\tau_1,\omega(\tau_1,\epsilon_n))&-f(\tau_2,\omega(\tau_2,\epsilon_n))|\to 0,\;\text{as}\; |\tau_1-\tau_2|\to 0\label{ie6.11}\\
 |f(\tau_1,\omega(\tau_1,\epsilon_n))g(\tau_1,\omega(\tau_1,\epsilon_n))&-f(\tau_2,\omega(\tau_2,\epsilon_n))g(\tau_2,\omega(\tau_2,\epsilon_n))|\to 0,\;\text{as}\; |\tau_1-\tau_2|\to 0\label{ie6.12}
 \end{align}
 Let $F=\underset{(\tau,\omega)\in J\times[-R,R]}{\sup f(\tau,\omega)}$. We find
  \begin{align*}
 & \left| f(\tau_1,\omega(\tau_1,\epsilon_n))\int_{0}^{\tau_1}(\tau_1-\sigma)^{\alpha-1} \left\lbrace g(\sigma,\omega(\sigma,\epsilon_n))+\epsilon_n\right\rbrace  d\sigma\right.\nonumber\\
  &\left.\qquad-f(\tau_2,\omega(\tau_2,\epsilon_n))\int_{0}^{\tau_2}(\tau_2-\sigma)^{\alpha-1} \left\lbrace g(\sigma,\omega(\sigma,\epsilon_n))+\epsilon_n\right\rbrace d\sigma\right|\\
  &= \left|\left(  f(\tau_1,\omega(\tau_1,\epsilon_n))\int_{0}^{\tau_1}(\tau_1-\sigma)^{\alpha-1} \left\lbrace g(\sigma,\omega(\sigma,\epsilon_n))+\epsilon_n\right\rbrace  d\sigma\right.\right.\nonumber\\
  &\left.\left.\qquad-f(\tau_1,\omega(\tau_1,\epsilon_n))\int_{0}^{\tau_1}(\tau_1-\sigma)^{\alpha-1} \left\lbrace g(\sigma,\omega(\sigma,\epsilon_n))+\epsilon_n\right\rbrace d\sigma\right) \right|\\
  & +\left|\left(  f(\tau_1,\omega(\tau_1,\epsilon_n))\int_{0}^{\tau_2}(\tau_2-\sigma)^{\alpha-1} \left\lbrace g(\sigma,\omega(\sigma,\epsilon_n))+\epsilon_n\right\rbrace  d\sigma\right.\right.\nonumber\\
&\left.\left.\qquad-f(\tau_2,\omega(\tau_2,\epsilon_n))\int_{0}^{\tau_2}(\tau_2-\sigma)^{\alpha-1} \left\lbrace g(\sigma,\omega(\sigma,\epsilon_n))+\epsilon_n\right\rbrace d\sigma\right) \right|\\
&\leq F \left\lbrace \int_{0}^{\tau_1}\left\lbrace (\tau_1-\sigma)^{\alpha-1}-(\tau_2-\sigma)^{\alpha-1}\right\rbrace   |g(\sigma,\omega(\sigma,\epsilon_n))+\epsilon_n|  d\sigma\right.\nonumber\\
  &\left.\qquad
\qquad +\int_{\tau_1}^{\tau_2}(\tau_2-\sigma)^{\alpha-1}  |g(\sigma,\omega(\sigma,\epsilon_n))+\epsilon_n| d\sigma\right\rbrace\\
&\quad+|f(\tau_1,\omega(\tau_1,\epsilon_n))-f(\tau_2,\omega(\tau_2,\epsilon_n))|\int_{0}^{\tau_2}(\tau_2-\sigma)^{\alpha-1} \left\lbrace |g(\sigma,\omega(\sigma,\epsilon_n))+\epsilon_n|\right\rbrace  d\sigma\\
&\leq 2F(\|h\|+\epsilon)(\tau_2-\tau_1)^\alpha+\tau_2^\alpha(\|h\|+\epsilon)|f(\tau_1,\omega(\tau_1,\epsilon_n))-f(\tau_2,\omega(\tau_2,\epsilon_n))|
  \end{align*}
 This shows that,
 \begin{align}\label{ie6.13}
 &\left| f(\tau_1,\omega(\tau_1,\epsilon_n))\int_{0}^{\tau_1}(\tau_1-\sigma)^{\alpha-1} \left\lbrace g(\sigma,\omega(\sigma,\epsilon_n))+\epsilon_n\right\rbrace  d\sigma\right.\nonumber\\
     &\left.\qquad-f(\tau_2,\omega(\tau_2,\epsilon_n))\int_{0}^{\tau_2}(\tau_2-\sigma)^{\alpha-1} \left\lbrace g(\sigma,\omega(\sigma,\epsilon_n))+\epsilon_n\right\rbrace d\sigma\right|  \to 0,\text{as}\,|\tau_2-\tau_1|\to 0.
\end{align}  

\noindent Using inequalities \eqref{ie6.11}, \eqref{ie6.12} and \eqref{ie6.13}, we conclude from \eqref{ie6.6} that
$$|\omega(\tau_1,\epsilon_n)-\omega(\tau_2,\epsilon_n)|\to 0,,\text{as}\,|\tau_2-\tau_1|\to 0, $$  This shows that  $\left\lbrace \omega(\tau,\epsilon_n) \right\rbrace $ converges uniformly to $\omega(\tau)$ as $ n\to\infty$. Hence taking limit as $n\to\infty$ of equation Equ.\eqref{e6.7}, we get
$$\omega(\tau)=f(\tau,\omega(\tau))\left[ \dfrac{\omega_0}{f(0,\omega_0)}+\dfrac{1-\alpha}{B(\alpha)}  g(\tau,\omega(\tau))+\dfrac{\alpha}{B(\alpha)(1-\alpha)}\int_{0}^{\tau}(\tau-\sigma)^{\alpha-1}g(\sigma,\omega(\sigma))d\sigma \right]. $$
Thus $\omega(\tau)$ is a solution of ABC--hybrid FDEs \eqref{He1.1}--\eqref{He1.2}. Taking limit as $n\to\infty$ of inequality \eqref{e5.6}, we get $u(\tau)<\omega(\tau),\;\tau\in J$. Hence ABC--hybrid FDEs \eqref{He1.1}--\eqref{He1.2} has a maximal solution.
\end{proof}
\section{Comparison Results}
\begin{theorem}\label{Th7.1}
Suppose the hypotheses {\normalfont(H1)-(H2) }and condition \eqref{b4.4} hold. Also assume that the function $g$ satisfies the condition \eqref{c5.1}.  If there exists a function $u\in AC(J,\mathbb{R})$ such that
\begin{align}
 ^{ABC}{_0\mathcal{D}^\alpha_\tau}\left[ \dfrac{u(\tau)}{f(\tau,u(\tau))}\right] &\leq g(\tau,u(\tau)), \;\text{a.e.}\;\tau\in J,\label{He6.1}\\
 u(0)&\leq \omega_0,\label{He6.2}
 \end{align}
 Then
 $$u(\tau)\leq \omega(\tau),\;\tau\in J,$$
 where $r$ is maximal solution of the  {\normalfont ABC--hybrid FDEs} \eqref{He1.1}--\eqref{He1.2}.
\end{theorem}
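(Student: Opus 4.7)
The plan is to mimic and extend the argument already used at the end of the proof of Theorem \ref{Hythm6.2}, where the maximal solution $r(\tau)$ is obtained as the uniform limit of the solutions $\omega(\tau,\epsilon_n)$ of the perturbed problem \eqref{Hye6.1}--\eqref{Hye6.2} with $\epsilon_n \downarrow 0$. The key idea is that the $+\epsilon_n$ perturbation gives us a \emph{strict} fractional differential inequality for $\omega(\cdot,\epsilon_n)$, which is precisely what Theorem \ref{Hythm4.1} needs.

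First I would fix a decreasing sequence $\{\epsilon_n\}$ of positive numbers with $\epsilon_n \to 0$ and $\epsilon_n \leq \epsilon_0$, where $\epsilon_0$ is the constant produced in Theorem \ref{Hyth6.1}, so that for each $n$ the perturbed hybrid FDE \eqref{He5.3}--\eqref{He5.4} admits a solution $\omega(\tau,\epsilon_n)$. By construction
\[
{}^{ABC}{_0\mathcal{D}^\alpha_\tau}\!\left[\frac{\omega(\tau,\epsilon_n)}{f(\tau,\omega(\tau,\epsilon_n))}\right] = g(\tau,\omega(\tau,\epsilon_n)) + \epsilon_n > g(\tau,\omega(\tau,\epsilon_n)), \qquad \tau \in J,
\]
and $\omega(0,\epsilon_n) = \omega_0 + \epsilon_n$. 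The hypothesis on $u$ supplies the non-strict inequality
\[
{}^{ABC}{_0\mathcal{D}^\alpha_\tau}\!\left[\frac{u(\tau)}{f(\tau,u(\tau))}\right] \leq g(\tau,u(\tau)), \qquad u(0) \leq \omega_0 < \omega_0 + \epsilon_n = \omega(0,\epsilon_n).
\]

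Next, I would apply Theorem \ref{Hythm4.1} with $v = u$ and $w = \omega(\cdot,\epsilon_n)$. The inequality for $w$ is strict (because of the $+\epsilon_n$), the initial inequality $v(0) < w(0)$ holds strictly, and the monotonicity hypothesis on $\omega \mapsto \omega/f(\tau,\omega)$ from (H1)(ii) together with the regularity requirement on the hybrid derivative are guaranteed because $u \in AC(J,\mathbb{R})$ and $\omega(\cdot,\epsilon_n)$ is a solution. Theorem \ref{Hythm4.1} therefore yields
\[
u(\tau) < \omega(\tau,\epsilon_n), \qquad \tau \in J, \; n \in \mathbb{N}\cup\{0\}.
\]

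Finally, I would invoke Theorem \ref{Hythm6.2}: the sequence $\{\omega(\tau,\epsilon_n)\}$ is decreasing, equicontinuous and bounded below by any solution, hence it converges uniformly on $J$ to the maximal solution $r(\tau)$. Passing to the limit $n \to \infty$ in the strict inequality above gives $u(\tau) \leq r(\tau)$ for every $\tau \in J$, which is the desired conclusion. The main subtlety, and really the only place one must be careful, is verifying that the $+\epsilon_n$ term supplies genuine strictness in hypothesis (ii) of Theorem \ref{Hythm4.1}; once this is in place the rest is a direct composition of Theorems \ref{Hythm4.1} and \ref{Hythm6.2}, and no use of condition \eqref{c5.1} is needed in the core of the argument (that condition enters only through its role in guaranteeing the existence of the maximal solution via the non-strict comparison Theorem \ref{Hythm4.2}).
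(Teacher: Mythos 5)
Your proposal is correct and follows essentially the same route as the paper: compare $u$ with the $\epsilon$-perturbed solution $\omega(\cdot,\epsilon)$ via the strict-inequality comparison Theorem \ref{Hythm4.1} (the $+\epsilon$ term supplying the strictness and $u(0)\leq\omega_0<\omega_0+\epsilon$ the strict initial inequality), then let $\epsilon\to 0$ and use that these perturbed solutions converge to the maximal solution. Your side remark that condition \eqref{c5.1} is not actually invoked in this argument also matches the paper's proof, which lists it as a hypothesis but never uses it.
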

\begin{proof}
Let $\epsilon>0$ be arbitrary small. Then by Theorem \ref{Hythm6.2},\; $\omega(\tau,\epsilon)$ is a solution of the ABC--hybrid FDEs \eqref{Hye6.1}--\eqref{Hye6.2}. Therefore  
\begin{align*}
 ^{ABC}{_0\mathcal{D}^\alpha_\tau}\left[ \dfrac{\omega(\tau,\epsilon)}{f(\tau,\omega(\tau,\epsilon))}\right] &= g(\tau,\omega(\tau,\epsilon))+\epsilon, \;\text{a.e.}\;\tau\in J,\\
 \omega(0,\epsilon)= \omega_0+\epsilon
\end{align*}
This gives,
\begin{align}
 ^{ABC}{_0\mathcal{D}^\alpha_\tau}\left[ \dfrac{\omega(\tau,\epsilon)}{f(\tau,\omega(\tau,\epsilon))}\right] &> g(\tau,\omega(\tau,\epsilon)), \;\text{a.e.}\;\tau\in J,\\
 \omega(0,\epsilon)> \omega_0
\end{align}
Therefore $u$ is lower solution and $\omega(\tau,\epsilon)$ is upper solution of  $$^{ABC}{_0\mathcal{D}^\alpha_\tau}\left[ \dfrac{\omega(\tau)}{f(\tau,\omega(\tau))}\right] =g(\tau,\omega(\tau)).$$ 
Further,
$$
u(0)\leq \omega_0<\omega_0+\epsilon=\omega(0,\epsilon).
$$
By applying Theorem \ref{Hythm4.1}, we obtain
$$ u(\tau)<\omega(\tau,\epsilon), \;\text{for all}\; \tau\in J,$$ 
In limiting case as $\epsilon\to 0$, we get
$$u(\tau)\leq \omega(\tau),\; \tau\in J.$$
\end{proof}

The proof of the following Theorem can be given in similar way as in the case of Theorem \ref{Th7.1}.
\begin{theorem}
Suppose the hypotheses {\normalfont(H1)-(H2) }and condition \eqref{b4.4} hold. Also assume that the function $g$ satisfies the condition \eqref{c5.1}.  If there exists a function $v\in AC(J,\mathbb{R})$ such that
\begin{align}
 ^{ABC}{_0\mathcal{D}^\alpha_\tau}\left[ \dfrac{v(\tau)}{f(\tau,v(\tau))}\right] &\geq g(\tau,v(\tau)), \;\text{a.e.}\;\tau\in J,\\
v(0)&\geq \omega_0,
 \end{align}
 Then
 $$\rho(\tau)\leq v(\tau),\;\tau\in J.$$
 where $\rho$ is minimal solution of the  {\normalfont ABC--hybrid FDEs} \eqref{He1.1}--\eqref{He1.2}.
\end{theorem}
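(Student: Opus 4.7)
The plan is to mirror the proof of Theorem \ref{Th7.1}, with the roles of upper and lower perturbations interchanged. Since the existence of the minimal solution $\rho$ is established by the ``similar lines'' version of Theorems \ref{Hyth6.1}--\ref{Hythm6.2}, I would first explicitly invoke that parallel construction: for each sufficiently small $\epsilon>0$, the perturbed ABC--hybrid FDEs
\begin{align*}
^{ABC}{_0\mathcal{D}^\alpha_\tau}\!\left[ \dfrac{\omega(\tau)}{f(\tau,\omega(\tau))}\right] &= g(\tau,\omega(\tau))-\epsilon,\;\text{a.e.}\;\tau\in J,\\
\omega(0)&=\omega_0-\epsilon,
\end{align*}
admits a solution $\rho(\tau,\epsilon)$, and the family $\{\rho(\cdot,\epsilon_n)\}$ associated with an increasing sequence $\epsilon_n\downarrow 0$ converges uniformly on $J$ to the minimal solution $\rho$ of \eqref{He1.1}--\eqref{He1.2}.

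Next, I would observe that $\rho(\tau,\epsilon)$ automatically satisfies the strict differential inequality
$$
^{ABC}{_0\mathcal{D}^\alpha_\tau}\!\left[ \dfrac{\rho(\tau,\epsilon)}{f(\tau,\rho(\tau,\epsilon))}\right] = g(\tau,\rho(\tau,\epsilon))-\epsilon < g(\tau,\rho(\tau,\epsilon)),\;\tau\in J,
$$
together with $\rho(0,\epsilon)=\omega_0-\epsilon<\omega_0\leq v(0)$. Thus $\rho(\cdot,\epsilon)$ plays the role of the ``lower'' function and $v$ the role of the ``upper'' function in the hypotheses of Theorem \ref{Hythm4.1}, with the strict inequality occurring in the lower one. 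Applying Theorem \ref{Hythm4.1} yields
$$
\rho(\tau,\epsilon)<v(\tau),\;\text{for all}\;\tau\in J.
$$

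Finally, passing to the limit $\epsilon\to 0$ (equivalently, along the sequence $\epsilon_n\downarrow 0$) and using the uniform convergence $\rho(\tau,\epsilon_n)\to\rho(\tau)$, the strict pointwise inequality collapses to $\rho(\tau)\leq v(\tau)$ on $J$, which is the desired conclusion. The one step that is not a routine transposition from Theorem \ref{Th7.1} is the very first: one must be careful that the ``$-\epsilon$'' perturbation produces a genuine solution of the perturbed problem (so that the equivalent fractional integral equation, the smallness condition analogous to \eqref{b4.4} with $\omega_0-\epsilon$ in place of $\omega_0+\epsilon$, and the decreasing-in-$\epsilon$ monotonicity argument all still go through). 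This is the only real obstacle; once it is in place, the comparison step via Theorem \ref{Hythm4.1} and the limit argument are immediate copies of the proof of Theorem \ref{Th7.1}.
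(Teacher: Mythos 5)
Your proposal is correct and is exactly the argument the paper intends: the paper omits this proof, stating only that it "can be given in similar way as in the case of Theorem \ref{Th7.1}", and your transposition (the $-\epsilon$ perturbation yielding $\rho(\cdot,\epsilon)$ with the strict lower inequality and $\rho(0,\epsilon)<\omega_0\leq v(0)$, then Theorem \ref{Hythm4.1} and the limit $\epsilon\to 0$) is the right way to carry that out. The only cosmetic slip is calling $\epsilon_n\downarrow 0$ an "increasing sequence"; it is the solutions $\rho(\cdot,\epsilon_n)$, not the $\epsilon_n$, that increase.
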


Using Theorem \ref{Th7.1}, we can prove the uniqueness result for ABC--hybrid FDEs \eqref{He1.1}--\eqref{He1.2}. The detail of which is given in the following Theorem.
\begin{theorem}
Suppose the hypotheses {\normalfont(H1)-(H2) }and condition \eqref{b4.4} hold. Also assume that there exists a function $G:J\times \mathbb{R}^+\to \mathbb{R}^+$ satisfying
$$ |g(\tau,\omega)-g(\tau,\eta)|\leq G\left(\tau, \left| \dfrac{\omega}{f(\tau,\omega)}-\dfrac{\eta}{f(\tau,\eta)}\right|\right) ,\text{for all }\;\tau\in J;\; \omega,\eta\in \mathbb{R}.$$
If identically zero function is the only solution of the ABC--hybrid FDEs 
\begin{align}
^{ABC}{_0\mathcal{D}^\alpha_\tau}m(\tau) &=G(\tau,m(\tau)), \;\text{a.e.}\;\tau\in J,\;0<\alpha<1,\label{He7.7}\\
 m(0)&=0,\label{He7.8}
 \end{align}
 
\noindent Then the ABC--hybrid FDEs \eqref{He1.1}--\eqref{He1.2} has a unique solution on J.
\end{theorem}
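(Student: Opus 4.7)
The plan is to reduce uniqueness for the hybrid equation \eqref{He1.1}--\eqref{He1.2} to the hypothesis that the zero function is the only solution of the auxiliary ABC--FDE \eqref{He7.7}--\eqref{He7.8}, by majorising the ``hybrid difference'' of two candidate solutions through the AB--integral kernel and then invoking the comparison Theorem \ref{Th7.1}. Suppose $\omega_1,\omega_2\in AC(J,\mathbb{R})$ both solve \eqref{He1.1}--\eqref{He1.2} and set $H_i(\tau)=\omega_i(\tau)/f(\tau,\omega_i(\tau))$ for $i=1,2$. Each $H_i$ satisfies $^{ABC}{_0\mathcal{D}}_\tau^\alpha H_i(\tau)=g(\tau,\omega_i(\tau))$ with the common initial value $H_i(0)=\omega_0/f(0,\omega_0)$, so subtracting the integral representations produced by Lemma \ref{ABCLm4.1} gives
\[
H_1(\tau)-H_2(\tau)=\,^{AB}{_0}I_\tau^\alpha\!\bigl[g(\cdot,\omega_1(\cdot))-g(\cdot,\omega_2(\cdot))\bigr](\tau).
\]

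Setting $p(\tau)=|H_1(\tau)-H_2(\tau)|$, we have $p\in C(J,\mathbb{R}^+)$ with $p(0)=0$. Because the AB--integral has a nonnegative kernel, the hypothesis $|g(\tau,\omega)-g(\tau,\eta)|\le G(\tau,|\omega/f(\tau,\omega)-\eta/f(\tau,\eta)|)$ yields the pointwise bound
\[
p(\tau)\le\,^{AB}{_0}I_\tau^\alpha G(\cdot,p(\cdot))(\tau),\qquad \tau\in J.
\]
I would then introduce the majorant $\phi(\tau)=\,^{AB}{_0}I_\tau^\alpha G(\cdot,p(\cdot))(\tau)$ and apply $^{ABC}{_0\mathcal{D}}_\tau^\alpha$ to obtain $^{ABC}{_0\mathcal{D}}_\tau^\alpha\phi(\tau)=G(\tau,p(\tau))$, with $\phi(0)=0$ (using $G(\tau,0)=0$, which is forced by the assumed existence of the zero solution to the auxiliary FDE). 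Assuming---or enforcing via the monotone envelope $\widetilde G(\tau,s)=\sup_{0\le \rho\le s}G(\tau,\rho)$---that $G$ is nondecreasing in its second argument, the estimate $p\le\phi$ gives $^{ABC}{_0\mathcal{D}}_\tau^\alpha\phi(\tau)\le G(\tau,\phi(\tau))$, so $\phi$ qualifies as a lower function for the scalar auxiliary equation \eqref{He7.7}--\eqref{He7.8}.

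Invoking the comparison reasoning of Theorem \ref{Th7.1} in the degenerate hybrid case ($f\equiv 1$, $g$ replaced by $G$, $\omega_0=0$) therefore places $\phi$ below the maximal solution of \eqref{He7.7}--\eqref{He7.8}; by hypothesis that maximal solution is identically zero, so $\phi\equiv 0$ and consequently $p\equiv 0$. This forces $H_1\equiv H_2$ on $J$, and the injectivity of $\eta\mapsto\eta/f(\tau,\eta)$ granted by hypothesis (H1)(ii) then gives $\omega_1\equiv\omega_2$, establishing uniqueness.

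The main technical obstacle is that the ABC derivative admits no convenient chain rule for absolute values, so working directly with $^{ABC}{_0\mathcal{D}}_\tau^\alpha|H_1-H_2|$ is not available; the plan circumvents this by staying at the integral level, exploiting positivity of the AB--integral kernel to absorb the absolute value, and only passing to the differential form after introducing the manifestly nonnegative majorant $\phi$. A secondary item requiring care is confirming that the comparison conclusion of Theorem \ref{Th7.1} may be specialised to the scalar auxiliary equation (with $f\equiv 1$ and $g=G$), i.e.\ verifying hypotheses \eqref{b4.4} and \eqref{c5.1} for $G$, possibly after a truncation of $G$ to the strip $[0,\|p\|]$ and use of the envelope $\widetilde G$; this bookkeeping, rather than the core comparison idea, is where the finest attention is needed.
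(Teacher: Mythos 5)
Your overall strategy mirrors the paper's: reduce uniqueness to the auxiliary scalar problem \eqref{He7.7}--\eqref{He7.8} by showing the ``hybrid difference'' of two solutions is dominated by a function that the comparison machinery forces to vanish. The paper does this at the differential level, setting $m(\tau)=\bigl|\tfrac{\omega}{f(\tau,\omega)}-\tfrac{\eta}{f(\tau,\eta)}\bigr|$ and claiming $^{ABC}{_0\mathcal{D}^\alpha_\tau}m\leq G(\tau,m)$ directly (via a dubious manipulation of the absolute value inside the ABC kernel), then concluding $m\equiv 0$ ``using assumption.'' Your instinct to avoid differentiating $|H_1-H_2|$ and to work instead at the integral level is sound, and the identity $H_1-H_2={}^{AB}{_0}I_\tau^\alpha[g(\cdot,\omega_1)-g(\cdot,\omega_2)]$ together with kernel positivity correctly yields $p\leq{}^{AB}{_0}I_\tau^\alpha G(\cdot,p)$.

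However, your route has a genuine gap at the step $^{ABC}{_0\mathcal{D}}_\tau^\alpha\phi(\tau)=G(\tau,p(\tau))\leq G(\tau,\phi(\tau))$: this requires $G$ to be nondecreasing in its second argument, which is nowhere among the hypotheses. You flag this, but the proposed repair via the monotone envelope $\widetilde G(\tau,s)=\sup_{0\leq\rho\leq s}G(\tau,\rho)$ does not work: the crucial hypothesis is that the \emph{zero function is the only solution} of $^{ABC}{_0\mathcal{D}^\alpha_\tau}m=G(\tau,m)$, $m(0)=0$, and since $\widetilde G\geq G$ this property is not inherited by $\widetilde G$ (a larger right-hand side can admit nontrivial solutions emanating from zero). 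So after the substitution the very assumption you need to force $\phi\equiv 0$ is no longer available. A second unresolved point is the appeal to ``the comparison reasoning of Theorem \ref{Th7.1}'' for the scalar problem: that theorem is stated under (H1)--(H2), \eqref{b4.4} and the one-sided Lipschitz condition \eqref{c5.1}, none of which are granted for $G$; deferring this to ``bookkeeping'' leaves the chain $\phi\leq r_{\max}\equiv 0$ unjustified. (To be fair, the paper's own proof is no more rigorous on this last point, but your argument additionally depends on the monotonicity of $G$, which the paper's differential-level route does not, so the gap is specific to your proposal and not merely inherited.)
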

\begin{proof}
As the required assumptions (H1)-(H2) and the condition \eqref{b4.4} hold, the ABC--hybrid FDEs \eqref{He1.1}--\eqref{He1.2} has a  solution on J. Suppose there exists two solutions $\omega,\eta$ for ABC--hybrid FDEs \eqref{He1.1}--\eqref{He1.2}.

\noindent Define
$$ m(\tau)=\left| \dfrac{\omega}{f(\tau,\omega)}-\dfrac{\eta}{f(\tau,\eta)}\right|,\;\tau\in J.$$
We find by using linearity 
\begin{align}\label{ie7.9}
^{ABC}{_0\mathcal{D}^\alpha_\tau}\left( \dfrac{\omega}{f(\tau,\omega)}-\dfrac{\eta}{f(\tau,\eta)}\right)&= \;^{ABC}{_0\mathcal{D}^\alpha_\tau}\left[ \dfrac{\omega(\tau)}{f(\tau,\omega(\tau))}\right] -\;^{ABC}{_0\mathcal{D}^\alpha_\tau}\left[ \dfrac{\eta(\tau)}{f(\tau,\eta(\tau))}\right]\nonumber\\
&=g(\tau,\omega(\tau))-g(\tau,\eta(\tau))\leq |g(\tau,\omega(\tau))-g(\tau,\eta(\tau))|\nonumber\\
&\leq G\left(\tau, \left| \dfrac{\omega}{f(\tau,\omega)}-\dfrac{\eta}{f(\tau,\eta)}\right|\right)=G(\tau,m(\tau)),\;\tau\in J.
\end{align} 
Again by using the fact that $|f|'\leq|f'|$, we find
\begin{align}\label{ie7.10}
^{ABC}{_0\mathcal{D}^\alpha_\tau}m(\tau)&= \dfrac{B(\alpha)}{1-\alpha}\int_{0}^{\tau}\mathbb{E}_\alpha\left[ -\dfrac{\alpha}{1-\alpha}(\tau-\sigma)^\alpha\right] \left| \dfrac{\omega}{f(\tau,\omega)}-\dfrac{\eta}{f(\tau,\eta)}\right|'d\sigma\nonumber\\
&\leq \dfrac{B(\alpha)}{1-\alpha}\int_{0}^{\tau}\mathbb{E}_\alpha\left[ -\dfrac{\alpha}{1-\alpha}(\tau-\sigma)^\alpha\right] \left| \left( \dfrac{\omega}{f(\tau,\omega)}-\dfrac{\eta}{f(\tau,\eta)}\right)' \right|d\sigma\nonumber\\
&\leq \left| \dfrac{B(\alpha)}{1-\alpha}\int_{0}^{\tau}\mathbb{E}_\alpha\left[ -\dfrac{\alpha}{1-\alpha}(\tau-\sigma)^\alpha\right]  \left( \dfrac{\omega}{f(\tau,\omega)}-\dfrac{\eta}{f(\tau,\eta)}\right)' d\sigma\right|\nonumber\\
&=\;^{ABC}{_0\mathcal{D}^\alpha_\tau}\left( \dfrac{\omega}{f(\tau,\omega)}-\dfrac{\eta}{f(\tau,\eta)}\right)
\end{align}
Combining inequalities \eqref{ie7.9} and \eqref{ie7.10}, we obtain
\begin{equation}\label{eq7.11}
^{ABC}{_0\mathcal{D}^\alpha_\tau}m(\tau)\leq G(\tau,m(\tau)),\; \tau\in J.
\end{equation}
Using definition of $m(\tau)$, we find 
\begin{equation}\label{eq7.12}
m(0)=\left|\dfrac{\omega(0)}{f(0,\omega(0)_)}-\dfrac{\eta(0)}{f(0,\eta(0))}\right|=\left| \dfrac{\omega_0}{f(0,\omega_0)}-\dfrac{\omega_0}{f(0,\omega_0)}\right|=0
\end{equation}
From equations \eqref{eq7.11} and \eqref{eq7.12}, using assumption, we get $m(\tau)=0,\;\tau\in J. $
From which we can easily show that
$$\dfrac{\omega}{f(\tau,\omega)}=\dfrac{\eta}{f(\tau,\eta)},\;\tau\in J. $$
Hence $\omega=\eta$. This proves the uniqueness of solution.
\end{proof}

\section*{Conclusion}
Fractional  integral inequalities and comparison results acquired in the present paper can utilized to analyze the various qualitative and quantitative properties of solutions for a different class of ABC--hybrid FDEs subject to enhanced initial and boundary conditions.

\end{document}